\def\b{\begin{eqnarray}}
\def\e{\end{eqnarray}}
\newcommand{\T}{{\Bbb T}}
\newcommand{\N}{{\Bbb N}}
\newcommand{\C}{{\Bbb C}}
\newcommand{\Z}{{\Bbb Z}}
\newtheorem{theorem}{Theorem}[section]
\newtheorem{proposition}[theorem]{Proposition}
\newtheorem{lemma}[theorem]{Lemma}
\newtheorem{remark}[theorem]{Remark}
\newtheorem{corollary}[theorem]{Corollary}
\begin{document}
\title[Big Hankel operators on vector-valued Fock spaces in $\C^d$]{Big Hankel operators on vector-valued Fock spaces in $\C^d$}

\author{H\'el\`ene Bommier-Hato}

\address{Bommier-Hato: Faculty of Mathematics,
University of Vienna,
Oskar-Morgenstern-Platz 1, 1090 Vienna, Austria\  \ \& \ Institut de Math\'ematiques de Marseille, Universit\'e de Provence,  13453 Marseille Cedex 13, France} \email{helene.bommier@gmail.com}
 

\author{Olivia Constantin}
\address{
Constantin: Faculty of Mathematics,
University of Vienna,
Oskar-Morgenstern-Platz 1, 1090 Vienna, Austria
Austria}
\email{olivia.constantin@univie.ac.at}

\thanks{The authors were supported by the FWF project P 30251-N35.}
\date{}

\subjclass[2010]{ 47B35, 30H20, 30H30}

\keywords{Hankel operators, vector-valued Fock spaces}


\begin{abstract} We study big Hankel operators acting on vector-valued Fock spaces with radial weights in $\C^d$. We provide complete characterizations 
for the boundedness, compactness and Schatten class membership of such operators. 
\end{abstract}

\maketitle

\section{Introduction}

The classical Fock space (or, the Segal-Bargmann space) 
has a long  and celebrated history and its origins are found in quantum mechanics. 
In spite of the richness of the existing literature on scalar Fock spaces,  the vector-valued case
has, to the best of our knowledge, not yet been thoroughly considered.  
The investigation of spaces of analytic functions in the vector-valued framework
brings along new insights
 and it often requires the development 
of entirely new techniques compared to the scalar setting (see \cite{Peller}).
The objective of our paper is to study big Hankel operators with anti-analytic symbols on generalized vector-valued Fock spaces.

Seip and Youssfi \cite{SY} studied big Hankel operators with anti-holomorphic symbols acting on a large class of scalar
Fock spaces with radial weights subject to a mild smoothness condition (see below). Using their sharp estimates for the reproducing kernel,  we investigate this class of operators 
in the vector-valued setting and define  adequate versions of  Bloch, Besov spaces and of mean oscillation.
 
 Let us first present our framework. 
We assume $\Psi: [0,\infty) \to [0,\infty)$ is a $C^3$-function such that
\begin{equation}\label{psi}
\Psi'(x) >0\,,\quad \Psi''(x) \ge 0\quad\text{and}\quad \Psi'''(x) \ge 0\,.
\end{equation}
 We now define the class $\mathcal{S}$ of functions $g: [0,\infty) \to [0,\infty)$ such that there exists a real number $\eta<\frac{1}{2}$  for which
\begin{equation}\label{phi 1}
g''(x)=O\left(x^{-\frac{1}{2}}\left[g'(x)\right]^{1+\eta}\right),\ x\rightarrow+\infty.
\end{equation}
We assume that the function
$$\Phi(x):=x \Psi'(x) $$
is in $\mathcal{S}$, and, when $d>1$, we also require that $\Psi$ is in $\mathcal{S}$.
For $\varphi(z):=\Psi(|z|^2)$, $z \in \C^d$,  let $d\mu_\varphi(z)=e^{-\varphi(z)}dm_d(z)$, where $dm_d(z)$ 
denotes the Lebesgue measure on $\C^d$.
Given a separable Hilbert space ${\mathcal H}$, we denote by  $L^2_\varphi({\mathcal H})$ the space of measurable $\mathcal H$-valued  
functions that are square integrable with respect to $d\mu_\varphi$.
We define the vector-valued Fock space ${\mathcal F}_\varphi^2({\mathcal H})$ as the subspace of $L^2_\varphi({\mathcal H})$ consisting of holomorphic functions, i.e.
$${\mathcal F}_\varphi^2({\mathcal H}) = \{ f: \C^d \to {\mathcal H}\ \text{holomorphic}:\ \Vert f \Vert_\varphi^2=
\int_{\C^d} \Vert f(z)\Vert^2 d\mu_\varphi(z) < \infty\}\,.$$
The point evaluations are bounded linear maps from ${\mathcal F}_\varphi^2({\mathcal H})$ to ${\mathcal H}$: more precisely, for any $f\in {\mathcal F}_\varphi^2({\mathcal H})$ we have 
\begin{equation}\label{pointev}
\|f(z)\|\le c(z) \|f\|_\varphi, \quad z\in\C^d,
\end{equation}
where $c(z)\asymp e^{\Psi(|z|^2)/2} \Phi'(|z|^2)^{1/2}(\Psi'(|z|^2))^{(d-1)/2}$.  For $\dim \mathcal{H}=1$, this estimate was proved
in Lemma 8.2 in \cite{SY}, and the passage to the vector-valued case is straightforward via bounded linear functionals. 
It follows that ${\mathcal F}_\varphi^2(\mathcal{H})$ is  a closed subspace of $L^2_\varphi({\mathcal H})$ and hence 
the orthogonal projection from $L^2_\varphi({\mathcal H})$ onto 
${\mathcal F}_\varphi^2({\mathcal H})$ is given by
\begin{equation}\label{proj}
(P_\varphi f)(z)=\int_{\C^d} K_\varphi(z,w)\,f(w)\,d\mu_\varphi(w)\,,\qquad z \in \C^d\,,
\end{equation}
where $\C^d \times \C^d \ni (z,w) \mapsto K_\varphi(z,w)$ denotes the reproducing kernel of the scalar Fock space 
${\mathcal F}_\varphi^2(\C)$.
Again, the last formula is easily deduced from the reproducing formula of the scalar Fock space ${\mathcal F}_\varphi^2(\C)$ applied to 
$z \mapsto \langle P_\varphi f(z),h\rangle$, where $h \in {\mathcal H}$ is arbitrary. 

We are now ready to define vectorial Hankel operators. 
In what follows, $\mathcal{L}({\mathcal H})$ will stand for the space of bounded linear operators on $\mathcal{H}$ and
$ {\mathcal K}({\mathcal H})$ will stand for the space of compact linear operators on ${\mathcal H}$. We denote by ${\mathcal T}_\varphi({\mathcal L}({\mathcal H}))$ the space of 
holomorphic  operator-valued functions $T: \C^d \to {\mathcal L}({\mathcal H})$ 
that satisfy
$$K_\varphi(\cdot,z)\Vert T(\cdot)\Vert_{{\mathcal L}({\mathcal H})} \in L_\varphi^2(\C^d)\ \text{for all}\,z\in\C^d.$$
For $T \in {\mathcal T}_\varphi({\mathcal L}({\mathcal H}))$ we define the {\it big Hankel operator} $H_{T^\ast}$ with symbol $T^\ast$ by
$$H_{T^\ast}f(z):=(I-P_\varphi)(T(\cdot)^\ast f(\cdot))(z)= \int_{\C^d} [T(z)^\ast-T(w)^\ast]f(w) \cdot K_\varphi(z,w)\,d\mu_\varphi(w)$$
for all $f\in {\mathcal F}_\varphi^2({\mathcal H})$. 

In the scalar case, the boundedness/compactness 
of such operators was shown  to be equivalent to their symbols belonging to the  Bloch space/little Bloch space (see \cite{SY}). Moreover, the Schatten
class membership is equivalent to the symbol belonging to analytic Besov spaces. 

We recall that the scalar Bloch space in several complex variables was first introduced by Timoney \cite{T1, T2} for bounded symmetric
domains.  
The scalar Bloch space ${\mathcal B}$, corresponding to the weight $e^{-\Psi(|z|^2)}$ on $\C^d$, was considered by Seip and Youssfi \cite{SY} 
and is defined as the space of holomorphic functions $f: \C^d \to \C$ with
\begin{equation}\label{scalar-bloch}
\Vert f \Vert_{{\mathcal B}}=\sup_{z \in \C^d}\, 
\Big\{ \sup_{\xi \in \C^d,\, \xi \neq 0} \frac{|\langle \nabla f(z),\,\bar{\xi}\rangle|}{\beta(z,\xi)}\Big\} < \infty,
\end{equation}
where  $\beta(z,\xi)$ denotes the Bergman metric 
$$\beta(z,\xi)=\sqrt{\langle B(z)\xi,\,\xi\rangle}\,,\qquad z, \xi \in \C^d\,,$$
and $B(z)$ is the $d \times d$-matrix with entries
$$\Big[ \frac{\partial^2}{\partial \bar{z}_j\,\partial z_k}\,\log\,K(z,z)\Big]_{jk}\,,\qquad 1 \le j,k \le d\,.$$
$B(z)$ is positive-definite and it is usually referred to as the Bergman matrix. 
A standard argument (see e.g. \cite{Z}) shows that
\begin{equation}\label{bloch}
\Vert f \Vert_{{\mathcal B}}= \sup_{z \in \C^d}\sqrt{ \langle\, \overline{B^{-1}(z)}\,\nabla f(z),\,\nabla f(z)\rangle}\,.
\end{equation}

\noindent We shall now define an operator-valued version of the Bloch space ${\mathcal B}$, for which
we provide several adequate equivalent norms.
One of these is an analogue of \eqref{scalar-bloch} (see Section 2), the second one is expressed in terms of  mean oscillation (see \eqref{norm-bmo}), 
and it turns out that our Bloch space coincides with an operator-version of BMOA.
We now define a third norm
which is more relevant for our approach in studying the Hankel operator. 
Inspired by (\ref{bloch}) we introduce
\begin{equation}\label{qt}
Q_T(z):=\sum_{1\le i,j \le d} \overline{B^{-1}(z)}_{ij} D_j T(z)  \Big( D_i T(z) \Big)^* , \quad z\in \C^d,
\end{equation}
where $B^{-1}(z)_{ij}$ denotes the $(ij)$-th entry of the hermitian matrix $B^{-1}(z)$ and $D_i T(z)^*$ is the adjoint of the operator
$D_i T(z)=\frac{\partial T}{\partial z_i}(z)$.

The {\it operator-valued Bloch space} $\mathcal{B}({\mathcal{L}(\mathcal H}))$ is the space of holomorphic functions $T:\C^d\rightarrow \mathcal{L}({\mathcal H})$ with 
\begin{equation}\label{vBloch}
\|T\|_{\mathcal{B}(\mathcal{L}({\mathcal H}))}=\|T(0)\|_{{\mathcal{L}(\mathcal H})}+\sup_{z\in\C^d} \|Q_T(z)\|_{{\mathcal{L}(\mathcal H})}^{1/2}<\infty.
\end{equation}
Notice that for ${\mathcal H}=\C$, in view of (\ref{bloch}), we recover the scalar Bloch space. 
The operator $Q_T(z)$ can be expressed in terms of the radial derivative, the tangential derivatives of $T$, as well as the
eigenvalues of $B(z)$ (see Section 2). Taking this into account, we show that $\mathcal{B}({\mathcal{L}(\mathcal H}))$ can be characterized as
the space of holomorphic functions $T:\C^d\rightarrow {\mathcal{L}(\mathcal H})$ with the property that there exist $c_1,c_2>0$ such that
\begin{gather*}
\|RT(z)\|_{{\mathcal L}({\mathcal H})}\le c_1 |z|\sqrt{\Phi'(|z|^2)}\text{ and }\|T_{ij}(T)(z)\|_{{\mathcal L}({\mathcal H})}\le c_2 |z|\sqrt{\Psi'(|z|^2)}, \quad 1\le i,j\le d,
\end{gather*}
where $RT$ denotes the radial derivative of $T$, and $T_{ij}(T)$ denote the tangential derivatives of $T$, i.e.
\begin{equation}\label{radial-tang}
RT(z)=\sum_{k=1}^d z_k  \frac{\partial T}{\partial z_k},\quad\quad 
T_{ij}(T)=\overline{z}_i\, \frac{\partial T}{\partial z_j} - \overline{z}_j\, \frac{\partial T}{\partial z_i},\quad 1\le i,j\le d.
\end{equation}

For a continuous function $f:\C^d\rightarrow \mathcal{L} \left(\mathcal{H}\right)$ such that $\left\|f(.)\right\|_{\mathcal{L} \left(\mathcal{H}\right)}\left|k_z\right| ^2$ is in $L^1\left(d\mu_{\varphi}\right)$ for all $z$, one defines its  Berezin transform analogously to the scalar case by 
$$ \tilde f (z)=\int_{\C^d}f(w)\left|k_z(w)\right| ^2 d\mu_{\varphi}(w),\ z\in\C^d.$$
 For a continuous function  $T:\C^d\rightarrow \mathcal{L} \left(\mathcal{H}\right)$ we define
\begin{equation}\label{def mo}
	MO^2 T^*(z):=\widetilde{TT^*}(z)-\tilde T(z) \tilde T^*(z), \ z\in\C^d,
\end{equation}
provided  $\left\|T(.)\right\|_{\mathcal{L} \left(\mathcal{H}\right)}\left|k_z\right|$ is in $L^2_\varphi(\C)$ for all $z\in\C^d$.
We say that $T$ has {\it bounded mean oscillation} if 
$\sup_{z\in\C^d}\left\|MO^2 T^*(z)\right\|_{\mathcal{L} \left(\mathcal{H}\right)}<\infty$ and we introduce the norm
\begin{equation}\label{norm-bmo}
	\left\|T\right\|_{BMO(\mathcal{L} \left(\mathcal{H})\right)}:=\sup_{z\in\C^d}\left\|MO^2 T^*(z)\right\|^{1/2}_{\mathcal{L} \left(\mathcal{H}\right)}+\left\|T(0)\right\|_{\mathcal{L} \left(\mathcal{H}\right)}.
\end{equation}
For the connection between Hankel operators and bounded mean oscillation see also \cite{bekolle}.


Throughout this paper, for two functions $E_1,E_2$,  the notation $E_1\lesssim E_2$ means that
there is a constant $k > 0$ independent of the argument such that $E_1\le k E_2$. If both $E_1\lesssim E_2$ and $E_2\lesssim E_1$ hold,
then we write $E_1\asymp E_2$.

The next theorem characterizes the boundedness of the Hankel operator $H_{T^*}$. 

\begin{theorem}\label{boundedness}
Given a holomorphic function $T:\C^d\rightarrow {\mathcal L}({\mathcal H})$, the following are equivalent:
\begin{enumerate}
\item[$(a)$]  $T \in {\mathcal T}_\varphi({\mathcal L}({\mathcal H}))$ and the Hankel operator $H_{T^\ast}$ is bounded 
 from ${\mathcal F}_\varphi^2({\mathcal H})$ to ${ L}_\varphi^2({\mathcal H}) ;$
\item [$(b)$]$ T \in {\mathcal B}\left({\mathcal L}({\mathcal H})\right)$;
\item [$(c)$]  $\sup_{z\in\C^d}\left\|MO^2 T^*(z)\right\|^{1/2}_{\mathcal{L} \left(\mathcal{H}\right)}<\infty$.
\end{enumerate}
Moreover,
$$ \left\|T\right\|_{ \mathcal{B}\left(\mathcal{L} \left(\mathcal{H}\right)\right)}\asymp
\Bigl(\left\|H_{T^*}\right\|+\left\|T(0)\right\|_{\mathcal{L} \left(\mathcal{H}\right)}\Bigr)
\asymp \left\|T\right\|_{BMO(\mathcal{L} \left(\mathcal{H})\right)}.
$$
\end{theorem}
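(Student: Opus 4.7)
The plan is to establish the cyclic chain of implications $(a) \Rightarrow (c) \Rightarrow (b) \Rightarrow (a)$, tracking the corresponding norm inequalities. A preliminary observation, specific to the radial-weight Fock space setting, is that the Berezin transform fixes holomorphic (and anti-holomorphic) operator-valued functions: for every holomorphic $T:\C^d\to\mathcal{L}(\mathcal{H})$ one has $\widetilde T(z) = T(z)$. This follows by expanding $T$ in the monomial basis and using the orthogonality of monomials in $L^2_\varphi$ together with the diagonal form $K_\varphi(z,w)=\sum_\alpha z^\alpha\bar w^\alpha/\|w^\alpha\|_\varphi^2$ forced by radiality. For the implication $(a)\Rightarrow (c)$, I would test $H_{T^*}$ on the $\mathcal{H}$-valued functions $f_h(w):=k_z(w)\,h$, which satisfy $\|f_h\|_\varphi = \|h\|$. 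A direct computation gives $\|T^*f_h\|_\varphi^2 = \langle\widetilde{TT^*}(z)h,h\rangle$, while two applications of the reproducing formula, combined with $\widetilde T = T$, yield $\|P_\varphi(T^*f_h)\|_\varphi^2 = \|T(z)^*h\|^2 = \|\widetilde T(z)^*h\|^2$. By the Pythagorean identity, $\|H_{T^*}f_h\|_\varphi^2 = \langle MO^2 T^*(z)h,h\rangle$; taking the supremum over unit vectors $h$ gives $\|MO^2 T^*(z)\|_{\mathcal{L}(\mathcal{H})}^{1/2}\le \|H_{T^*}\|$ for every $z$.

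For $(c)\Rightarrow (b)$, the identity $\widetilde T = T$ yields the positive representation
\begin{equation*}
MO^2 T^*(z) = \int_{\C^d} \bigl(T(w)-T(z)\bigr)\bigl(T(w)-T(z)\bigr)^*\,|k_z(w)|^2\,d\mu_\varphi(w),
\end{equation*}
so that $\langle MO^2 T^*(z)h,h\rangle = \int\|(T(w)^*-T(z)^*)h\|^2\,|k_z(w)|^2\,d\mu_\varphi(w)$. A Taylor expansion of $T$ around $z$ combined with the second-moment asymptotics $\int (w_i-z_i)\overline{(w_j-z_j)}\,|k_z(w)|^2 \,d\mu_\varphi(w) \asymp \overline{B^{-1}(z)}_{ij}$ (available from the scalar analysis of \cite{SY}) delivers $\|MO^2 T^*(z)\|_{\mathcal{L}(\mathcal{H})}\asymp \|Q_T(z)\|_{\mathcal{L}(\mathcal{H})}$, and hence $\|T\|_{BMO(\mathcal{L}(\mathcal{H}))}\asymp \|T\|_{\mathcal{B}(\mathcal{L}(\mathcal{H}))}$ after adding $\|T(0)\|$ to both sides.

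Finally, for $(b)\Rightarrow (a)$, I would start from the pointwise bound
\begin{equation*}
\|H_{T^*}f(z)\|_{\mathcal{H}} \le \int_{\C^d}|K_\varphi(z,w)|\,\|T(z)-T(w)\|_{\mathcal{L}(\mathcal{H})}\,\|f(w)\|_{\mathcal{H}}\,d\mu_\varphi(w),
\end{equation*}
which reduces the problem to the $L^2_\varphi$-boundedness of the scalar integral operator with kernel $|K_\varphi(z,w)|\,\|T(z)-T(w)\|_{\mathcal{L}(\mathcal{H})}$. I would apply a Schur test with a positive weight of the form $K_\varphi(z,z)^\alpha$, combining two ingredients: the sharp off-diagonal Gaussian-type decay of $K_\varphi$ established by Seip and Youssfi, and a Bergman-Lipschitz estimate $\|T(z)-T(w)\|_{\mathcal{L}(\mathcal{H})}\lesssim \|T\|_{\mathcal{B}(\mathcal{L}(\mathcal{H}))}\cdot d_\varphi(z,w)$ for a suitable Bergman-type distance $d_\varphi$, itself obtained by integrating the pointwise bounds on $RT$ and $T_{ij}(T)$ along a piecewise geodesic between $z$ and $w$. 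The main obstacle is exactly this last implication: calibrating the anisotropic (radial versus tangential) off-diagonal decay of $K_\varphi$, whose scales are governed by $\Phi'$ and $\Psi'$, against the matching Lipschitz behavior of $T$, so that the two Schur integrals converge with constants depending only on $\|T\|_{\mathcal{B}(\mathcal{L}(\mathcal{H}))}$.
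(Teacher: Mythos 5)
Your overall architecture, $(a)\Rightarrow(c)\Rightarrow(b)\Rightarrow(a)$, matches the paper's, and two of the three legs are sound. The step $(a)\Rightarrow(c)$ (testing on $k_zh$, using $\widetilde T=T$ and the Pythagorean identity to get $\|H_{T^*}(k_zh)\|^2=\langle MO^2T^*(z)h,h\rangle$) is exactly the paper's argument. For $(b)\Rightarrow(a)$ you choose the Schur-test route with the kernel $|K_\varphi(z,w)|\,\|T(z)-T(w)\|_{\mathcal L(\mathcal H)}$ and the Bergman--Lipschitz estimate; this is the paper's acknowledged alternative proof (it appears in the Corollary following the theorem, where the boundedness of the sublinear operator with kernel $d_\Psi(z,w)|K_\Psi(z,w)|$ is imported from Section 5 of Seip--Youssfi). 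Note, though, that the paper's primary proof of $(b)\Rightarrow(a)$ goes through H\"ormander's $\bar\partial$-estimate precisely because it yields the pointwise bound $\|H_{T^*}f\|^2\lesssim\int\|Q_T(z)^{1/2}f(z)\|^2d\mu_\varphi$, which is reused for compactness and Schatten classes; the Schur route does not produce this.

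The genuine gap is in $(c)\Rightarrow(b)$. You claim that a Taylor expansion of $T$ about $z$, combined with second-moment asymptotics for $|k_z|^2\,d\mu_\varphi$, "delivers" $\|MO^2T^*(z)\|\asymp\|Q_T(z)\|$. The direction you actually need here is the lower bound $\|Q_T(z)\|\lesssim\|MO^2T^*(z)\|$, and a naive Taylor expansion cannot give it: writing $T(w)-T(z)=\sum_jD_jT(z)(w_j-z_j)+R(w,z)$, the first-order term contributes roughly $\langle Q_T(z)h,h\rangle$ to $\int\|(T(w)^*-T(z)^*)h\|^2|k_z(w)|^2d\mu_\varphi(w)$, but the remainder and the cross terms are not sign-definite and there is no a priori control on them from hypothesis $(c)$ alone, so the quadratic term could in principle cancel against them. (Also, "$m_{ij}(z)\asymp\overline{B^{-1}(z)}_{ij}$" entrywise is not the right statement; you would need comparability of the associated positive forms, and I do not believe this moment estimate is stated in \cite{SY}.) The paper circumvents this by localizing: restrict the integral defining $MO^2T^*(z)$ to the anisotropic set $D(z,a)$ of \eqref{polyball}, use $|K(w,z)|^2e^{-\varphi(w)}\gtrsim K(z,z)/|D(z,a)|$ there, and then recover $\|RT(z)^*e\|^2/(|z|^2\lambda(z))$ and the tangential terms $\|T_{ij}(T)(z)^*e\|^2/(|z|^2\mu(z))$ from the local average of $\|(T(\cdot)^*-T(z)^*)e\|^2$ via Cauchy's formula and subharmonicity applied to the scalar functions $w\mapsto\langle(T(w)-T(z))h,e\rangle$, after reducing by unitary invariance to $z=(|z|,0,\dots,0)$. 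Your step needs to be replaced by an argument of this type; as written it assumes the hard half of the $(b)\Leftrightarrow(c)$ equivalence. (The reverse inequality $\|MO^2T^*\|\lesssim\|Q_T\|$, which you also assert, is obtained in the paper only indirectly, by going around the cycle $(b)\Rightarrow(a)\Rightarrow(c)$.)
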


 Inspired by the scalar case \cite{SY}, we present  two alternative proofs of the implication  $(b)\Rightarrow(a)$ above:
one of them relies on the Schur test combined with the reproducing kernel estimates provided in \cite{SY}, while
the second one is based on H\"ormander estimates for the $\bar\partial$-equation. Due to non-commutativity, the latter proof is not a mere adaptation
of the one from the scalar case, and it
provides an estimate in terms of  the multiplication operator with symbol the operator-valued function $Q_T^{1/2}$, which will be used in an essential way in the  characterizations of  compactness and  Schatten class membership of the Hankel operator.


Subsequently, in Theorem \ref{compactness} (see Section 4) we show that a "little oh" version of condition $(b)$, respectively $(c)$, from Theorem \ref{boundedness}
characterizes  the compactness of $H_{T^*}$.

We recall that, given two separable Hilbert spaces $H_1, H_2$ and $p>0$, a compact linear operator $A:H_1\rightarrow H_2$ belongs to the Schatten class
$\mathcal{S}^p=\mathcal{S}^p(H_1, H_2)$ if the sequence of eigenvalues $\{s_n\}_n$ of $(T^*T)^{1/2}$ satisfies 
$$
\|A\|_{\mathcal{S}^p}:=(\sum_{n} s_n^p)^{1/p}<\infty.
$$ 
The Schatten class membership of $H_{T^*}$ is characterized below.
\begin{theorem}\label{Schatten}
Suppose $T:\C^d\rightarrow \mathcal{K}(\mathcal{H})$ is holomorphic and $p\ge 2$. Then the following are equivalent:
\begin{enumerate}
\item[$(a)$]  
 $T \in {\mathcal T}_\varphi({\mathcal L}({\mathcal H}))$ and the Hankel operator $H_{T^*}$ belongs to the Schatten class $\mathcal{S}^p({\mathcal F}_\varphi^2({\mathcal H}), L_\varphi^2({\mathcal H}))$;
\item [$(b)$]
$Q_T^{1/2}:\C^d\rightarrow {\mathcal{S}^p}(\mathcal{H})$ is measurable and 
\begin{equation}\label{besov}
\int_{\C^d} \|Q_T(z)^{1/2}\|^p_{  \mathcal{S}^p(\mathcal{H})} K(z,z)\, d\mu_\varphi(z)<\infty;
\end{equation}
\item [$(c)$]  
$(MO^2 T^*)^{1/2}:\C^d\rightarrow {\mathcal{S}^p}(\mathcal{H})$ is measurable and 
\begin{equation}\label{SCH-BMO}
\int_{\C^d}\|\left(MO^2 T^*(z)\right)^{1/2}\|^{p}_{\mathcal S^p(\mathcal H)}  K(z,z)\, d\mu_\varphi(z)<\infty.
\end{equation}
\end{enumerate}
Moreover, we have  equivalence between the following quantities
$$ \|H_{T^*}\|_{\mathcal{S}^p}\asymp \left\|(Q_T)^{1/2}\right\|_{L^p(\C^d,\mathcal{S}^p({\mathcal H}), d\lambda_\varphi)}
\asymp \left\|(MO^2 T^*)^{1/2}\right\|_{L^p(\C^d,\mathcal{S}^p({\mathcal H}), d\lambda_\varphi)},$$
where $d\lambda_\varphi(z):=K(z,z)\, d\mu_\varphi(z)$.
\end{theorem}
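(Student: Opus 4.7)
The plan is to establish $(a) \Leftrightarrow (b)$ via the $\bar\partial$-techniques underlying Theorem \ref{boundedness} together with a test-function argument, and then to obtain $(b) \Leftrightarrow (c)$ through a uniform pointwise equivalence $\|Q_T(z)^{1/2}\|_{\mathcal S^p(\mathcal H)} \asymp \|(MO^2 T^*(z))^{1/2}\|_{\mathcal S^p(\mathcal H)}$.

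For $(b) \Rightarrow (a)$, I would exploit the H\"ormander-type estimate from the second proof of $(b)\Rightarrow(a)$ in Theorem \ref{boundedness}, which, as the authors emphasize, yields not merely boundedness but the pointwise domination
\[
\|H_{T^*} f\|_\varphi \;\lesssim\; \Big(\int_{\C^d} \|Q_T(z)^{1/2} f(z)\|^2\, d\mu_\varphi(z)\Big)^{1/2}.
\]
Thus $H_{T^*}$ is dominated by the multiplication operator $M_{Q_T^{1/2}}\colon \mathcal F_\varphi^2(\mathcal H)\to L_\varphi^2(\mathcal H)$. Composing with $P_\varphi$ represents $M_{Q_T^{1/2}}$ as an integral operator with operator-valued kernel $(z,w)\mapsto Q_T(z)^{1/2}K_\varphi(z,w)$, and a Russo-type Schatten estimate (valid for $p\ge 2$), combined with $\|K_\varphi(\cdot,z)\|_\varphi^2=K(z,z)$, gives
\[
\|H_{T^*}\|_{\mathcal S^p}^p \;\lesssim\; \int_{\C^d}\|Q_T(z)^{1/2}\|_{\mathcal S^p(\mathcal H)}^p\, K(z,z)\, d\mu_\varphi(z),
\]
which is precisely \eqref{besov}.

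For $(a)\Rightarrow(b)$, I would test $H_{T^*}$ against vector-valued normalized reproducing kernels $k_z\otimes e$, where $e$ ranges over an orthonormal basis $\{e_j\}$ of $\mathcal H$. An explicit computation, combined with the sharp kernel asymptotics of \cite{SY}, produces $\|H_{T^*}(k_z\otimes e)\|_\varphi^2 \gtrsim \langle Q_T(z)e,e\rangle$, and in fact a sesquilinear refinement $\|H_{T^*}|_{k_z\otimes\mathcal H}\|_{\mathcal S^p(\mathcal H)} \gtrsim \|Q_T(z)^{1/2}\|_{\mathcal S^p(\mathcal H)}$. Integrating against $d\lambda_\varphi$ and invoking the standard lower bound for Schatten $p$-norms in terms of continuous Parseval-type families of test vectors (the range $p\ge 2$ being essential here) reverses the inequality from the previous direction.

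The equivalence $(b)\Leftrightarrow(c)$ reduces to the pointwise Schatten comparison. Expanding $T(w)-T(z)$ to first order around $z$ and inserting into \eqref{def mo}, the leading term is $\sum_{ij}c_{ij}(z)\,D_iT(z)\,D_jT(z)^*$, where the Gaussian moments $c_{ij}(z)=\int(\bar w_i-\bar z_i)(w_j-z_j)|k_z(w)|^2\,d\mu_\varphi(w)$ are identified with $\overline{B^{-1}(z)}_{ij}$ by the kernel expansion of Seip--Youssfi. This recovers the quadratic form $Q_T(z)$ modulo a remainder. The main obstacle---and the only place where the vector-valued setting genuinely complicates the scalar argument---is controlling this remainder in $\mathcal S^{p/2}(\mathcal H)$-norm uniformly in $z$, which must be carried out via Cauchy-type estimates on Bergman-metric balls combined with the tangential/radial derivative controls from the Bloch characterization established after \eqref{vBloch}, while respecting the non-commutativity of operator products.
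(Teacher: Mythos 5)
Your step $(b)\Rightarrow(a)$ is essentially the paper's argument (domination of $H_{T^*}$ by the multiplication operator $M_{Q_T^{1/2}}$ via the H\"ormander estimate, then a Schatten bound for that multiplication operator; the paper gets the latter by interpolating between an explicit $\mathcal S^2$ computation and the trivial $\mathcal S^\infty$ bound, which is what your ``Russo-type estimate'' amounts to). The other two steps, however, have problems.

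The serious one is your claim that $(b)\Leftrightarrow(c)$ ``reduces to the pointwise Schatten comparison'' $\|Q_T(z)^{1/2}\|_{\mathcal S^p(\mathcal H)}\asymp\|(MO^2T^*(z))^{1/2}\|_{\mathcal S^p(\mathcal H)}$. Only one direction of this is true pointwise, namely $\|Q_T(z)^{1/2}e\|\lesssim\|(MO^2T^*(z))^{1/2}e\|$ (relation \eqref{kern-ev}, obtained from Cauchy estimates on the sets $D(z,a)$), which gives $(c)\Rightarrow(b)$. The reverse pointwise bound is false: already for $d=1$, $\mathcal H=\C$ and $T(z)=z^2$ one has $Q_T(0)=0$ while $MO^2T^*(0)$ is the (strictly positive) variance of $T$ against $|k_0|^2d\mu_\varphi$. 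The quantity $MO^2T^*(z)$ sees the global behaviour of $T$, so the Taylor remainder in your expansion cannot be controlled by the first-order term $Q_T(z)$ at the same point; no amount of care with non-commutativity fixes this. The implication $(b)\Rightarrow(c)$ must instead be routed through the operator, i.e.\ $(b)\Rightarrow(a)\Rightarrow(c)$, which is how the paper closes the cycle.

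For $(a)\Rightarrow(b)$ (equivalently $(a)\Rightarrow(c)$) you have the right ingredients --- the Parseval-frame identity $\mathrm{trace}(S)=\int_{\C^d}\sum_k\langle S(K_ze_k^z),K_ze_k^z\rangle\,d\mu_\varphi(z)$ applied to $S=((H_{T^*})^*H_{T^*})^{p/2}$ together with Jensen's inequality --- but you test against a \emph{fixed} orthonormal basis $\{e_j\}$ of $\mathcal H$. That only yields the lower bound $\int\sum_k\langle MO^2T^*(z)e_k,e_k\rangle^{p/2}K(z,z)\,d\mu_\varphi(z)$, and for a positive operator $A$ and $p\ge2$ one has $\sum_k\langle Ae_k,e_k\rangle^{p/2}\le\mathrm{trace}(A^{p/2})$, with equality only in the diagonalizing basis; so the fixed-basis bound is weaker than \eqref{SCH-BMO}. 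The missing idea --- which the authors flag in the introduction as the genuine vector-valued difficulty --- is to take test functions $K_ze_k^z$ with a $z$-dependent orthonormal basis diagonalizing the compact positive operator $MO^2T^*(z)$, after checking (their Lemma 5.1) that the frame identity is insensitive to this choice. Your phrase ``sesquilinear refinement'' gestures at the pointwise statement $\|N(z)\|_{\mathcal S^p}=\|(MO^2T^*(z))^{1/2}\|_{\mathcal S^p}$ for $N(z)e=H_{T^*}(k_ze)$, but the passage from that pointwise identity to the global integral bound is exactly where the diagonalizing bases are needed, and your proposal does not supply it.
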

Similar considerations to the ones in Section 9 in \cite{SY}, show that there 
is no nontrivial holomorphic function $T:\C^d\rightarrow \mathcal{L}(\mathcal{H})$ such that condition \eqref{besov} holds for $p=2$,  and therefore
there are no nontrivial Hilbert-Schmidt Hankel operators with anti-holomorphic symbols on $\mathcal{F}^2_\varphi(\mathcal{H})$.

Here, it is worthwhile mentioning the following specificity of  the vector-valued setting  in our approach to prove
the necessity of the conditions on the symbol $T$ for compactness, respectively Schatten class membership. At a first glance, the 
test functions that seem natural to consider are of the type   $k_z e$, where $k_z$ is the normalized reproducing kernel of ${\mathcal F}_\varphi^2({\C})$ and
$e\in {\mathcal H}$. However, it turns out that we need to consider test functions of the form $k_z e_z$, for an appropriate choice
of the vectors $e_z$, that depends on the operator-valued function $Q_T(z)$. 

Regarding previous studies of big Hankel operators on scalar Fock spaces
we would also like to mention \cite{bauer,bauer1,BY2,ortega,LR1,WCZ}, as for Hankel forms on vector-valued Bergman-type spaces we refer
to \cite{aleman,aleman1}.

The paper is organized as follows. Section 2 is concerned with equivalent definitions and basic properties of the operator-valued Bloch, little Bloch space, as well as some preliminary material. Section 3 is dedicated to the boundedness of $H_{T*}$, while in Section 4 we characterize
the compactness of   $H_{T*}$. Finally, in Section 5 we investigate the Schatten class membership of our Hankel operators.

\section{The operator-valued Bloch space, little Bloch space and BMOA}
We start with some considerations regarding the Bergman matrix.
Recall that the Bergman matrix  $B(z)$ is the $d \times d$-matrix with entries
$$\Big[ \frac{\partial^2}{\partial \bar{z}_j\,\partial z_k}\,\log\,K(z,z)\Big]_{jk}\,,\qquad 1 \le j,k \le d\,.$$
Notice that if $F(|z|^2):=K(z,z)$, then
$$B(z)=\frac{F'}{F}\,I + |z|^2\Big( \frac{F'}{F}\Big)' P_z\,,$$
where $I$ stands for the identity matrix, $P_z$ denotes  the projection of $\C^d$ onto $\text{span}\{z\}$, given by
$$P_zw
=\frac{1}{|z|^2}\,\langle w,\,z\rangle\, z\,\quad z,w\in\C^d .$$
We can rewrite
$$B(z)=\lambda(z)\,P_z + \mu(z)\,(I-P_z)\,,$$
where
\begin{eqnarray*}\label{}
\lambda(z) =  \frac{F'}{F}(|z|^2) + |z|^2\Big( \frac{F'}{F}\Big)'(|z|^2)\ \ \ {\hbox {and }}\ \ 
\mu(z) = \frac{F'}{F}(|z|^2)
\end{eqnarray*}
are the eigenvalues of $B(z)$.
Hence
\begin{equation}\label{inv}
\big(B(z)\big)^{-1}=\frac{1}{\lambda(z)}\,P_z + \frac{1}{\mu(z)}\,(I-P_z)\,.
\end{equation}
 Now Lemma 4.1 from \cite{SY} gives
\begin{eqnarray*}
 \frac{F'}{F}(r)&=&(1+o(1))\Psi'(r), \\
 \Bigl(\frac{F'}{F}\Bigr)'(r)&=&(1+o(1))\Psi''(r)+o(1)\frac{\Psi'(r)}{r}, \ \ \hbox{ as }  r\rightarrow \infty,
\end{eqnarray*}
which implies
\begin{eqnarray}\label{eigenv-comp}
\lambda(z)&\asymp&  \Psi'(|z|^2)+|z|^2 \Psi''(|z|^2)\nonumber=\Phi'(|z|^2)\\
\mu(z)&\asymp&  \Psi'(|z|^2).
\end{eqnarray}


It was shown in  Lemma 7.2 in \cite{SY}  that, instead of working with Bergman balls (i.e. balls corresponding to 
the Bergman distance), one can equivalently work with sets of the form  
\begin{equation}\label{polyball}
D(z,a)= \{w\ : \ |z-P_z w|\le a [\Phi'(|z|^2)]^{-1/2},\ |w-P_z w|\le a [\Psi'(|z|^2)]^{-1/2}  \},
\end{equation}
where $z\in\C^d,\,a>0$.
\begin{lemma}
The sets $D(z,a)$ are unitarily invariant, that is,  if $U: \C^d \to \C^d$ is a unitary map, then 
\begin{equation}\label{il}
U(D(z,a))=D(Uz,a),\quad z \in \C^d, a>0.
\end{equation}
\end{lemma}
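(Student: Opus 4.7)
The plan is to verify the identity by a direct computation, using only that $U$ preserves the inner product (hence norms) on $\mathbb{C}^d$. Since the two defining inequalities for $D(z,a)$ involve (i) the scalars $\Phi'(|z|^2)$ and $\Psi'(|z|^2)$, (ii) the projection $P_z$ onto $\operatorname{span}\{z\}$, and (iii) Euclidean norms of certain vectors, I need to track how each of these transforms when $z$ is replaced by $Uz$ and $w$ by $Uw$.

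First I would observe that $|Uz|=|z|$ since $U$ is unitary, so the two radii $a[\Phi'(|z|^2)]^{-1/2}$ and $a[\Psi'(|z|^2)]^{-1/2}$ in the definition of $D(z,a)$ are unchanged when $z$ is replaced by $Uz$. The second, and main, step is to establish the intertwining relation
\begin{equation*}
P_{Uz}(Uw) = U(P_z w), \qquad z,w \in \mathbb{C}^d, \; z \neq 0.
\end{equation*}
This follows immediately from the explicit formula $P_z w = \frac{\langle w,z\rangle}{|z|^2}\,z$ together with $\langle Uw,Uz\rangle = \langle w,z\rangle$ and $|Uz|^2=|z|^2$.

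With this in hand, I compute
\begin{equation*}
Uz - P_{Uz}(Uw) = Uz - U(P_z w) = U(z - P_z w),
\end{equation*}
so $|Uz - P_{Uz}(Uw)| = |z - P_z w|$, and similarly
\begin{equation*}
Uw - P_{Uz}(Uw) = U(w - P_z w), \qquad |Uw - P_{Uz}(Uw)| = |w - P_z w|.
\end{equation*}
Therefore $w$ satisfies the two inequalities defining $D(z,a)$ if and only if $Uw$ satisfies the two inequalities defining $D(Uz,a)$. This proves $U(D(z,a)) = D(Uz,a)$. The case $z=0$ is trivial since $D(0,a)$ is a Euclidean ball centered at the origin, which is invariant under any unitary map. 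There is no real obstacle here; the only point requiring attention is the intertwining of the rank-one projection $P_z$ with $U$, which is a one-line calculation.
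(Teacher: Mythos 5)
Your proof is correct and follows the same route as the paper, whose entire argument is the observation that $UP_z = P_{Uz}U$ together with the unitary invariance of the norms and of $|z|^2$ in the radii. You have simply written out the details the paper leaves as "straightforward."
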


\begin{proof} The proof is straightforward and relies on the identity $UP_z=P_{Uz}U$ for any $z \in \C^d$.
\end{proof}


Recall that in \eqref{qt} we introduced the operator
\begin{equation*}
Q_T(z):=\sum_{1\le i,j \le d} \overline{B^{-1}(z)}_{ij} D_j T(z)  \Big( D_i T(z) \Big)^* , \quad z\in \C^d,
\end{equation*}
Depending on the context, we shall use alternative expressions for $Q_T(z)$.  From (\ref{inv}) we have 
$$B^{-1}(z)_{ij}=\left(\frac{1}{\lambda(z)}-\frac{1}{\mu(z)}\right) \frac{z_i \,\overline{z}_j}{|z|^2} + \frac{1}{\mu(z)}\delta_{ij}\,.$$
Substituting this in the expression of  $Q_T(z)$ we obtain for $z\neq 0$
\begin{eqnarray}\label{b2}
Q_T(z)
&=& \frac{1}{|z|^2}\,\Big(\frac{1}{\lambda(z)} - \frac{1}{\mu(z)}\Big) \, RT(z) \Big( RT(z)\Big)^\ast + \frac{1}{4\mu(z)}\, \Delta(T(z) T(z)^\ast ) \nonumber\\
&=& \frac{1}{\lambda(z) |z|^2} \, RT(z)\Big( RT(z)\Big)^\ast  + \frac{1}{\mu(z) |z|^2} \,\sum_{1\le i<j\le d} T_{ij}(T)(z)\Big(T_{ij}(T)(z)\Big)^\ast \,,
\end{eqnarray}
where $RT$ and  $T_{ij}(T)$ were defined in \eqref{radial-tang}.
In particular, this shows that $Q_T(z)$ is a positive operator.

\noindent Another expression of $Q_T$ which will be useful in several of the subsequent proofs is the following. 
If $c_{kj}(z)$ stands for the $kj$ entry of the (hermitian) matrix ${B(z)^{-1/2}}$, where $1\le k,j\le d$, set
\begin{equation}\label{cj}
C_j (z):= \sum_{k=1}^d c_{kj}(z)\, D_k T(z),\quad z\in \C^d.
\end{equation}
Obviously $C_j (z)\in \mathcal{L}(\mathcal{H})$ and we have 
\begin{eqnarray}\label{QT-Cj}
\sum_{j=1}^d C_j(z) \Big(C_j(z)\Big)^\ast  
&=& \sum_{k,l=1}^d \Big( \sum_{j=1}^d c_{lj}(z)\, c_{jk}(z)\Big)  D_l T(z) \Big(D_k T(z)\Big)^\ast\nonumber \\
&=& \sum_{1\le k,l \le d} \Big(B^{-1}(z) \Big)_{lk} D_l T(z) \Big(D_k T(z)\Big)^\ast=Q_T(z)\,.
\end{eqnarray}
A straightforward calculation shows that $Q_T$ satisfies
\begin{equation}\label{itz}
	\left\|Q_{T+S}(z)\right\|^{1/2}_{{\mathcal L}({\mathcal H})}\leq\left\|Q_{T}(z)\right\|^{1/2}_{{\mathcal L}({\mathcal H})}+\left\|Q_{S}(z)\right\|^{1/2}_{{\mathcal L}({\mathcal H})},\quad T,S\in \mathcal{L}(\mathcal{H}),\  z\in\C^d,
\end{equation}
which implies that \eqref{vBloch} defines a norm on $\mathcal{B}\left(\mathcal{L}\left(\mathcal{H}\right)\right)$. 
Moreover, the completeness of $\mathcal{B}\left(\mathcal{L}\left(\mathcal{H}\right)\right)$ follows by a standard argument similar to the one in \cite{Z}.

In the next proposition we provide an equivalent norm on  $\mathcal{B}({\mathcal{L}(\mathcal H}))$, which is an analogue of \eqref{scalar-bloch},
and we prove the vectorial version of a standard estimate for Bloch functions in terms of the Bergman distance.  The Bergman distance is 
defined by
$$
d_\Psi (z,w):= \inf_\gamma \int_0^1 \beta(\gamma(t),\gamma'(t))\, dt, \quad z,w\in\C^d,
$$
where the infimum is taken over all piecewise $C^1-$smooth curves $\gamma:[0,1]\rightarrow \C^d$ such that $\gamma(0)=w$ and $\gamma(1)=z$.

\begin{proposition}\label{schur}
(a) We have  
\begin{equation} 
\Vert T \Vert_{\mathcal{B}({\mathcal{L}(\mathcal H}))}\asymp \|T(0)\|_{{\mathcal{L}(\mathcal H})} + \sup_{z \in \C^d}\, 
\Big\{ \sup_{\xi \in \C^d,\, \xi \neq 0} \frac{ \| \sum_{k=1}^d \xi_k D_k T(z)\|_{\mathcal{L}(\mathcal{H})}}{\beta(z,\xi)}\Big\},
\end{equation}
for all holomorphic functions $T:\C^d\rightarrow \mathcal{L}({\mathcal H})$, where the involved constants depend only on $d$.

(b) For any $T\in \mathcal{B}({\mathcal{L}(\mathcal H}))$ we have
\begin{equation}\label{schur-estimate}
\|T(z)-T(w)\|_{ \mathcal{L} (\mathcal{ H})}
\lesssim 
\Vert T \Vert_{\mathcal{B} ( \mathcal{L} (\mathcal{H}) )} d_\Psi (z,w),\quad z,w \in \C^d,
\end{equation}
where $d_\Psi$ denotes the Bergman distance induced by the Bergman metric $\beta$.
\end{proposition}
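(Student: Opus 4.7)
For part (a), the plan is to reduce the supremum defining the putative equivalent norm to something directly comparable to $\|Q_T(z)\|^{1/2}$ via the operators $C_j(z)$ introduced in \eqref{cj}. Since $B(z)$ is hermitian positive definite, $B(z)^{-1/2}$ is well-defined and hermitian, so the substitution $\xi = B(z)^{-1/2}\eta$ is a bijection of $\C^d\setminus\{0\}$ onto itself. A direct computation using hermiticity gives
\begin{equation*}
\beta(z,\xi)^2 = \langle B(z)\xi,\xi\rangle = \langle B(z)^{1/2}\eta,\,B(z)^{-1/2}\eta\rangle = \langle \eta,\eta\rangle = |\eta|^2,
\end{equation*}
and unwinding the definition of $C_j(z)$ yields $\sum_{k=1}^d \xi_k\, D_k T(z) = \sum_{j=1}^d \eta_j\, C_j(z)$. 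The supremum in part (a) therefore equals $\sup_{|\eta|=1}\|\sum_j \eta_j C_j(z)\|_{\mathcal{L}(\mathcal{H})}$, and the task reduces to comparing this quantity with $\|Q_T(z)\|^{1/2}$.

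This comparison follows from two elementary bounds, using the identity $Q_T(z)=\sum_j C_j(z)C_j(z)^*$ from \eqref{QT-Cj}. For the upper direction, I view $R_z := [C_1(z),\ldots,C_d(z)]:\mathcal{H}^d\to\mathcal{H}$ as a row operator, which satisfies $\|R_z\|^2 = \|R_z R_z^*\| = \|Q_T(z)\|$; applying $R_z$ to any unit rank-one tensor $\eta\otimes h$ gives $\|\sum_j \eta_j C_j(z)\,h\| \le \|R_z\| = \|Q_T(z)\|^{1/2}$. For the reverse direction, testing with $\eta=e_j$ shows $\|C_j(z)\| \le \sup_{|\eta|=1}\|\sum_j \eta_j C_j(z)\|$, so the triangle inequality applied to $Q_T(z)$ yields $\|Q_T(z)\|\le \sum_j \|C_j(z)\|^2 \le d\,\sup_{|\eta|=1}\|\sum_j \eta_j C_j(z)\|^2$. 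Together these give the claimed equivalence with constants depending only on $d$.

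Part (b) then follows from a standard integration-along-paths argument once (a) is in hand. For any piecewise $C^1$ curve $\gamma:[0,1]\to\C^d$ with $\gamma(0)=w$ and $\gamma(1)=z$, holomorphicity of $T$ gives $\frac{d}{dt}T(\gamma(t)) = \sum_k \gamma_k'(t)\,D_k T(\gamma(t))$ (the antiholomorphic derivatives vanish). Applying (a) pointwise to bound the integrand by $\|T\|_{\mathcal{B}(\mathcal{L}(\mathcal{H}))}\,\beta(\gamma(t),\gamma'(t))$, integrating over $[0,1]$, and then taking the infimum over admissible $\gamma$ in the definition of $d_\Psi$ yields the Lipschitz estimate \eqref{schur-estimate}. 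No deep machinery is required: the main content of the argument is really the linear-algebraic identification in part (a), which crucially uses hermiticity of $B(z)^{-1/2}$ together with the dimension-dependent comparison between operator norms over rank-one tensors and the full row-operator norm of $R_z$.
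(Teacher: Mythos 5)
Your proof is correct and follows essentially the same route as the paper: the substitution $\eta=B(z)^{1/2}\xi$ reducing the supremum to $\sup_{|\eta|=1}\|\sum_j\eta_jC_j(z)\|_{\mathcal{L}(\mathcal{H})}$, the comparison with $\|Q_T(z)\|^{1/2}_{\mathcal{L}(\mathcal{H})}$ via \eqref{QT-Cj} (testing with canonical basis vectors for the lower bound), and the path-integration argument for (b). The only, harmless, variation is your upper bound via the row-operator identity $\|R_z\|^2=\|R_zR_z^*\|=\|Q_T(z)\|_{\mathcal{L}(\mathcal{H})}$, which yields constant $1$ where the paper applies Cauchy--Schwarz and gets $\sqrt{d}$.
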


\begin{proof}
$(a)$ Using the fact that $\beta(z,\xi)=\sqrt{\langle B(z)\xi,\,\xi\rangle}$ and substituting $\eta:=B(z)^{1/2} \xi$ we may write
\begin{eqnarray}\label{chain}
E(z)&:=&\sup_{\xi \in \C^d,\, \xi \neq 0} \frac{ \| \sum_{k=1}^d \xi_k D_k T(z)\|_{{\mathcal L}({\mathcal H})}}{\beta(z,\xi)}=\sup_{\eta\in \C^d,\, \eta \neq 0}
  \frac{ \| \sum_{k=1}^d (B(z)^{-1/2} \eta)_k D_kT(z)\|_{{\mathcal L}({\mathcal H})}}{\|\eta\|}\nonumber\\
&=&  \sup_{w\in \C^d,\, \|w\|=1}
  \| \sum_{k=1}^d (B(z)^{-1/2} w)_k D_k T(z)\|_{{\mathcal L}({\mathcal H})} 
= \sup_{w\in \C^d,\, \|w\|=1}
  \| \sum_{j=1}^d w_j  C_j (z)\|_{{\mathcal L}({\mathcal H})},\nonumber\\
\end{eqnarray}
where, in the last two steps above, we used the notation from \eqref{cj}.  Particularizing $w$ in the last expression above to the vectors from the canonical basis of $\C^d$, we  obtain
\begin{equation}
E(z)\ge \|C_j (z)\|_{{\mathcal L}({\mathcal H})}=\|C_j(z)C_j(z)^*\|_{{\mathcal L}({\mathcal H})}^{1/2}\,\quad z\in\C^d, 1\le j\le d.
\end{equation} 
Using the above together with \eqref{QT-Cj} we deduce
\begin{equation}\label{prev}
d\cdot E(z)^2 \ge \sum_{j=1}^d \|C_j(z)C_j(z)^*\|_{{\mathcal L}({\mathcal H})}\ge \| \sum_{j=1}^d C_j(z)C_j(z)^*\|_{{\mathcal L}({\mathcal H})}=\|Q_T(z)\|_{{\mathcal L}({\mathcal H})}.
\end{equation}
On the other hand, relation \eqref{chain} and the Cauchy-Schwarz inequality immediately  give
\begin{eqnarray*}
E(z)^2&\le&   \sup_{w\in \C^d,\, \|w\|=1}( \sum_{j=1}^d |w_j|\|C_j(z)\|_{{\mathcal L}({\mathcal H})})^2\le ( \sum_{j=1}^d \|C_j(z)\|_{{\mathcal L}({\mathcal H})}^2)\\
&\le& d \cdot \| \sum_{j=1}^d C_j(z)C_j(z)^*\|_{{\mathcal L}({\mathcal H})}=d \cdot \|Q_T(z)\|_{{\mathcal L}({\mathcal H})},
\end{eqnarray*}
where the last inequality above follows by positivity.
Together with \eqref{prev} this implies
$$
E(z)\asymp  \|Q_T(z)\|_{{\mathcal L}({\mathcal H})}^{1/2},
$$
and $(a)$ now follows by taking the supremum over $z\in\C^d$ in the above relation.

In order to prove $(b)$, let $z,w\in \C^d$ and consider a piecewise $C^1$ curve $\gamma:[0,1]\rightarrow \C^d\  (\gamma=(\gamma_1,...,\gamma_d))$ such that $\gamma(0)=w$ and $\gamma(1)=z$.
Then, in view of $(a)$, we get
\begin{eqnarray*}
\|T(z)-T(w)\|_{{\mathcal L}({\mathcal H})}
&\le&  \int_0^1  \| \sum_{j=1}^d \gamma_j'(t) \,D_jT(\gamma(t))\|_{{\mathcal L}({\mathcal H})}\, dt  
\\
&\le& \sqrt{d} \sup_{\xi\in\C^d}\Vert Q_T(\xi) \Vert^{1/2}_{{\mathcal L}({\mathcal H})}
\int_0^1  \beta(\gamma(t), \gamma'(t)) \, dt.
\end{eqnarray*}
Taking now the supremum over $\gamma$ above leads us to  (b).
\end{proof}

The next lemma, which is a direct consequence of the reproducing kernel estimates proven in \cite{SY},  shows that the operator-valued Bloch space is contained in ${\mathcal T}_\varphi({\mathcal L}({\mathcal H}))$.
\begin{lemma}\label{integral}
If $T \in {\mathcal B}({\mathcal L}({\mathcal H}))$, then for any $z \in {\mathbb C}^d$ we have 
$\Vert T(\cdot) \Vert_{{\mathcal L}({\mathcal H})} \cdot K(\cdot,z) \in L_\varphi^2({\mathbb C})$.
\end{lemma}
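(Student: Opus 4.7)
The plan is to control $\|T(w)\|_{\mathcal{L}(\mathcal{H})}$ pointwise via the Bergman-distance Lipschitz estimate from Proposition \ref{schur}(b), and then reduce the claim to the integrability of $d_\Psi(z,\cdot)^2\,|K(\cdot,z)|^2$ against $d\mu_\varphi$, which is a purely scalar fact that follows from the sharp reproducing-kernel estimates of \cite{SY}.

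First, I would fix $z\in\C^d$ and apply Proposition \ref{schur}(b) to write
\begin{equation*}
\|T(w)\|_{\mathcal{L}(\mathcal{H})} \le \|T(z)\|_{\mathcal{L}(\mathcal{H})} + c_0\,\|T\|_{\mathcal{B}(\mathcal{L}(\mathcal{H}))}\, d_\Psi(z,w), \qquad w\in\C^d,
\end{equation*}
for some constant $c_0>0$ depending only on $d$. Squaring and integrating, it suffices to bound the two integrals
\begin{equation*}
I_1(z):=\int_{\C^d} |K(w,z)|^2\, d\mu_\varphi(w), \qquad I_2(z):=\int_{\C^d} d_\Psi(z,w)^2\,|K(w,z)|^2\, d\mu_\varphi(w).
\end{equation*}
The first is handled immediately: since $K(\cdot,z)\in\mathcal{F}_\varphi^2(\C)$, the reproducing property gives $I_1(z)=K(z,z)<\infty$.

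For $I_2(z)$ I would split $\C^d=D(z,a_0)\cup \bigl(\C^d\setminus D(z,a_0)\bigr)$ for some fixed $a_0>0$. On $D(z,a_0)$, the Bergman distance $d_\Psi(z,w)$ is uniformly bounded (this is standard for sets of the form \eqref{polyball} which, by \cite{SY}, are comparable to Bergman balls), so this contribution is controlled by a constant multiple of $I_1(z)=K(z,z)$. On the complement, one uses the pointwise off-diagonal decay of $|K(w,z)|^2 e^{-\varphi(w)}$ from \cite{SY}, which is of Gaussian type in $d_\Psi(z,w)$; a dyadic decomposition into annuli $D(z,2^{k+1}a_0)\setminus D(z,2^k a_0)$ then shows that $d_\Psi(z,w)^2$ is dominated by this decay, yielding a convergent geometric sum.

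The main obstacle is the tail estimate for $I_2(z)$: one must extract the precise Gaussian-type off-diagonal decay of the kernel from \cite{SY} and couple it with a volume estimate for the dyadic annuli $D(z,2^{k+1}a_0)\setminus D(z,2^k a_0)$ measured against $d\mu_\varphi$. Once this quantitative decay is in place, summing the dyadic contributions is routine and gives $I_2(z)<\infty$, completing the proof.
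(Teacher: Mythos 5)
Your reduction via Proposition \ref{schur}(b) is sound as far as it goes, and the term $I_1(z)=K(z,z)$ is fine, but the entire content of the lemma is concentrated in the tail of $I_2(z)$, and the estimate you invoke there is not available. The reference \cite{SY} does not prove pointwise Gaussian-type (or even exponential) off-diagonal decay of $|K(w,z)|^2e^{-\varphi(w)}$ in the Bergman distance for this class of weights; what it provides are near-diagonal comparisons ($|K(z,w)|^2\sim K(z,z)K(w,w)$ on $D(z,a)$, Lemmas 7.1--7.2) and the \emph{angular average} estimate of Lemma 3.1 for $\int_{-\pi}^{\pi}|F(xre^{i\theta})|^2\,d\theta$. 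So the step you flag as ``the main obstacle'' is a genuine gap, not a routine extraction: for general radial $\Psi$ in this class, pointwise off-diagonal decay in $d_\Psi$ is a nontrivial result in its own right and would have to be proved, not cited.

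The paper avoids this entirely by trading the Bergman-distance bound for a crude polynomial one. Note that $d_\Psi(0,w)\lesssim |w|\sqrt{\Phi'(|w|^2)}$ (integrate $\beta$ along the segment from $0$ to $w$), and the structural hypotheses \eqref{phi 1} with $\eta<\tfrac12$ give $\Psi'(x)\lesssim[1+\Psi(x)]^{1/(1-\eta)}$ and hence $\Phi'(x)\lesssim(1+x)[1+\Psi(x)]^3$. Thus $\Vert T(w)\Vert_{{\mathcal L}({\mathcal H})}$ grows at most like a fixed power of $(1+|w|)(1+\Psi(|w|^2))$, and since $\Psi$ grows at least linearly this factor is absorbed into $e^{\varepsilon\Psi(|w|^2)}$. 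One is then left with showing $\int|K(w,z)|^2e^{-(1-\varepsilon)\Psi(|w|^2)}\,dm_d(w)<\infty$ --- which, note, does \emph{not} follow from the reproducing property (that only handles the unweakened weight) --- and this is done by unitary invariance, polar coordinates, the angular-average kernel estimate, and an elementary comparison of $(1-2\varepsilon)\Psi(r^2)$ with $(2+\varepsilon)\Psi(xr)$ for large $r$. If you want to keep your structure, replace the hypothetical Gaussian decay by this absorption argument; as written, your proof does not close.
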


\begin{proof}
Since $\Psi \in {\mathcal S}$ and satisfies \eqref{psi} we have
$$[\Psi'(x)]^{-\eta} \Psi''(x) \lesssim \Psi'(x)\,,\qquad x \ge 0\,,$$
which implies
\begin{equation}\label{A}
\Psi'(x) \lesssim [1+\Psi(x)]^{\frac{1}{1-\eta}}\,,\qquad x \ge 0\,.
\end{equation}
Hence
\begin{equation}\label{B}
\Phi'(x) =\Psi'(x) + x\Psi''(x) \lesssim (1+x)[1+\Psi(x)]^3
\end{equation}
since $\eta <\frac{1}{2}$. From the fact that $T \in {\mathcal B}({\mathcal L}({\mathcal H}))$ together with \eqref{eigenv-comp} we obtain the estimate
$$\Vert R_T(z) \Vert_{{\mathcal L}({\mathcal H})}^2 \lesssim \lambda(z)|z|^2 \asymp \Phi'\left(|z|^2\right) |z|^2\,,\qquad z \in {\mathbb C}^d.$$
Combining this with \eqref{B} and taking into account the fact that $\Phi'$ is increasing, we deduce
\begin{eqnarray*}
\Vert T(z)-T(0) \Vert_{{\mathcal L}({\mathcal H})} &=& \Bigl\Vert \int_0^1 \frac{1}{t}\,R_T(tz)\,dt \Bigr\Vert_{{\mathcal L}({\mathcal H})} 
\le |z| \sqrt{(1+|z|^2)(1+\Psi(|z|^2))^3} \,,\qquad z \in {\mathbb C}^d.
\end{eqnarray*}
Since $\Psi$ grows at least like a linear function, the above estimate yields
\begin{eqnarray*}
I=I(z) &:=& \int_{{\mathbb C}^d} \Vert T(w)\Vert_{{\mathcal L}({\mathcal H})}^2 |K(w,z)|^2 e^{-\Psi(|w|^2)}\,dm_d(w) \\
&\lesssim& \int_{{\mathbb C}^d}  |K(w,z)|^2 e^{-(1-\varepsilon)\Psi(|w|^2)}\,dm_d(w)
\end{eqnarray*}
for any $\varepsilon \in (0,1/2)$. Recall that $K(w,z)=F(\langle z,w\rangle)$. 
Then, by unitary invariance, we may assume without loss of generality that $z=(x,0,\dots, 0)$ with $x>0$. If $d>1$ we write 
$w=(w_1,\xi)$ with $\xi \in {\mathbb C}^{d-1}$ and $w_1=re^{i\theta}$, and use polar coordinates to get
\begin{equation}\label{ast0}
I \lesssim \int_0^\infty \int_{-\pi}^{\pi} |F(xre^{i\theta})|^2 
\Big( \int_{{\mathbb C}^{d-1}} e^{-(1-\varepsilon)\Psi(r^2+|\xi|^2)}\,dm_{d-1}(\xi)\Big)\,r\,d\theta dr\,.
\end{equation}
Using again the monotonicity of $\Psi$ we deduce
\begin{eqnarray}\label{astast}
 \int_{{\mathbb C}^{d-1}} e^{-(1-\varepsilon)\Psi(r^2+|\xi|^2)}\,dm_{d-1}(\xi) &\lesssim& 
 e^{-(1-2\varepsilon)\Psi(r^2)} \int_{{\mathbb C}^{d-1}} e^{-\varepsilon\Psi(|\xi|^2)}\,dm_{d-1}(\xi) \nonumber \\
  &\lesssim& e^{-(1-2\varepsilon)\Psi(r^2)}\,.
\end{eqnarray}

The estimates of the reproducing kernel (see Lemma 3.1 in \cite{SY}) together with \eqref{A}-\eqref{B} give
$$\int_{-\pi}^{\pi} |F(xre^{i\theta})|^2 d\theta \lesssim (1+xr)^{3/2}[1+\Psi(xr)]^N e^{2\Psi(xr)}\,,$$
where $N=N(d)>0$ and the constants involved  depend on $x$, but not on $r$. Taking into account the above 
relation and \eqref{astast}, we now return to \eqref{ast0} to deduce
\begin{eqnarray*}
I 
\lesssim \int_0^\infty e^{-(1-2\varepsilon)\Psi(r^2)+(2+\varepsilon)\Psi(xr)}\,dr\,.
\end{eqnarray*}
To see that the last integral is finite,  put $Q(r)=(1-2\varepsilon)\,\Psi(r^2)-(2+\varepsilon)\Psi(xr)$ and 
notice that for $2(1-2\varepsilon)r- (2+\varepsilon)x \ge 1$ we have
$$Q'(r) =2(1-2\varepsilon)r \Psi'(r^2) -(2+\varepsilon)x\Psi'(xr)  \ge \min_{t \ge 0} \{ \Psi'(t)\}=: \delta >0\,,$$
and hence $e^{-Q(r)} \lesssim e^{-\delta r}$, which proves the claim.   
\end{proof}

\medskip
 Let $\mathcal{M} $ be a closed subspace of $\mathcal{L}\left(\mathcal{H}\right)$. 
The {\it little Bloch space} $\mathcal{B}_0(\mathcal{M})$  is the space of  
holomorphic functions $T:\C^d\rightarrow \mathcal{M}$
such that 
\begin{equation}\label{vBloch0}
\lim_{\left|z\right|\to +\infty}\left\|Q_T(z)\right\|_{\mathcal{L}\left(\mathcal{H}\right)}=0.
\end{equation}

Let us now show that the density of polynomials in the scalar little Bloch space extends to the operator-valued case. The proof of this
fact is standard and it is based on approximation by convolutions with Fej\'er kernels (see \cite{grafakos}). We include it for the sake of completeness.
\begin{theorem}\label{density pol}
 Let $\mathcal{M} $ be a closed subspace of $\mathcal{L}\left(\mathcal{H}\right)$. Then the holomorphic polynomials with coefficients in $\mathcal{M} $ are dense in $\mathcal{B}_0\left(\mathcal{M} \right)$.
\end{theorem}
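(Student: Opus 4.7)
The approach is Ces\`aro summation with respect to the $S^1$-action $z \mapsto e^{i\theta} z$ on $\C^d$. Decomposing $T = \sum_{k=0}^\infty T_k$ into homogeneous components (each with coefficients in $\mathcal{M}$, since $\mathcal{M}$ is closed and these coefficients are derivatives of $T$ at the origin), I define
\begin{equation*}
\sigma_N T(z) := \int_{-\pi}^{\pi} F_N(\theta)\, T(e^{i\theta} z)\, \frac{d\theta}{2\pi} = \sum_{k=0}^N \Bigl(1 - \frac{k}{N+1}\Bigr) T_k(z),
\end{equation*}
where $F_N$ is the Fej\'er kernel. Each $\sigma_N T$ is then a polynomial of degree $\le N$ with coefficients in $\mathcal{M}$, and $\sigma_N T(0)=T(0)$; thus it suffices to prove $\sup_{z\in\C^d}\|Q_{T-\sigma_N T}(z)\|^{1/2}\to 0$.

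Two observations form the backbone of the estimate. First, the rank-one projection $P_z w = |z|^{-2}\langle w, z\rangle z$ satisfies $P_{e^{i\theta} z} = P_z$, and the eigenvalues $\lambda,\mu$ of $B(z)$ depend only on $|z|$; hence $B(e^{i\theta} z) = B(z)$, and a short calculation (in which the phases from $D_j(T \circ R_\theta)(z) = e^{i\theta} D_j T(e^{i\theta} z)$ and its conjugate cancel) gives
\begin{equation*}
Q_{T \circ R_\theta}(z) = Q_T(e^{i\theta} z), \qquad R_\theta z := e^{i\theta} z.
\end{equation*}
Second, by Proposition \ref{schur}(a), the expression $\sup_{\xi\neq 0}\|\sum_j \xi_j D_j S(z)\|_{\mathcal{L}(\mathcal{H})}/\beta(z,\xi)$ is a seminorm in $S$ for each fixed $z$, so Minkowski's integral inequality yields
\begin{equation*}
\|Q_{\sigma_N T}(z)\|_{\mathcal{L}(\mathcal{H})}^{1/2} \lesssim \int_{-\pi}^{\pi} F_N(\theta)\, \|Q_T(e^{i\theta} z)\|_{\mathcal{L}(\mathcal{H})}^{1/2}\, \frac{d\theta}{2\pi}.
\end{equation*}

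Given $\varepsilon>0$, use $T\in\mathcal{B}_0(\mathcal{M})$ to pick $R>0$ with $\|Q_T(w)\|^{1/2}<\varepsilon$ for $|w|>R$. For $|z|>R$ every $e^{i\theta}z$ has modulus $>R$, so the display above together with \eqref{itz} gives $\|Q_{T-\sigma_N T}(z)\|^{1/2}\lesssim\varepsilon$ uniformly in $N$. For $|z|\le R$, the map $(\theta,z)\mapsto e^{i\theta}D_j T(e^{i\theta}z)$ is uniformly continuous into $\mathcal{L}(\mathcal{H})$ on the compact set $[-\pi,\pi]\times\overline{B_R}$, and Fej\'er's theorem for continuous vector-valued functions yields $\sup_{|z|\le R}\|D_j T(z)-D_j\sigma_N T(z)\|_{\mathcal{L}(\mathcal{H})}\to 0$; combined with the lower bound $\beta(z,\xi)\ge c_R\|\xi\|$ on $|z|\le R$ (which holds since $\mu(z)\asymp\Psi'(|z|^2)$ is continuous and strictly positive), this gives $\sup_{|z|\le R}\|Q_{T-\sigma_N T}(z)\|^{1/2}\to 0$. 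Letting first $N\to\infty$ and then $\varepsilon\to 0$ finishes the proof.

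The main obstacle is setting up the integral triangle inequality at the level of $\|Q_{\cdot}(z)\|^{1/2}$ via Proposition \ref{schur}(a), and carefully tracking the phase factors to establish the rotation identity $Q_{T\circ R_\theta}(z)=Q_T(e^{i\theta}z)$; once this rotation invariance is in hand, the far-field/near-field splitting is routine.
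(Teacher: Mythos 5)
Your proof is correct and follows essentially the same route as the paper's: approximation by Fej\'er/Ces\`aro means, the rotation invariance $Q_{T\circ R_\theta}(z)=Q_T(R_\theta z)$, the seminorm property of $T\mapsto\|Q_T(z)\|_{\mathcal{L}(\mathcal{H})}^{1/2}$ to pull the norm inside the convolution, and a far-field/near-field splitting. The only (immaterial) difference is that you average over the diagonal circle action, producing Ces\`aro means of the homogeneous expansion, whereas the paper averages over the full torus $\T^d$ with the $d$-dimensional Fej\'er kernel.
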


\begin{proof}
Assume $T\in\mathcal{B}_0\left(\mathcal{M}\right)$.
 For $\left(\theta_1,\cdots,\theta_d \right)\in \mathbb{R}^d$, we consider the unitary linear transformation in $\mathbb{C}^d$ defined by
$R_{\theta}(z):=\left(e^{i\theta_1}z_1,\cdots,e^{i\theta_d}z_d\right)  $, for all $z=\left(z_1,\cdots,z_d\right)\in \C^d$. 
The torus $\T^d=\left\{\left(e^{i\theta_1},\cdots,e^{i\theta_d}\right),\ \left(\theta_1,\cdots,\theta_d\right)\in\left[-\pi,\pi\right]^d\right\}$ is equipped with the Haar measure $d\theta$, and, for any nonnegative integer $N$,  the Fej\'er kernel $F_N$ is given by
\begin{equation}\label{Fejer}
F_N\left(e^{i\theta_1},\cdots,e^{i\theta_d}\right):=\sum_{\left|m_j\right|\leq N,\\
m_j\in \Z}\left(1-\frac{\left|m_1\right|}{N+1}\right)\cdots \left(1-\frac{\left|m_d\right|}{N+1}\right)e^{i m\cdot \theta},
\end{equation}
where $m\cdot \theta=m_1\theta_1+\cdots+m_d\theta_d $. The convolution 
\begin{equation}\label{convolution}
T_N(z)=\int_{\T^d}T\left( R_{-\theta}z\right)F_N\left(e^{i\theta_1},\cdots,e^{i\theta_d}\right)d\theta,\ z\in\C^d,
\end{equation}
is then a holomorphic polynomial with coefficients in $\mathcal{M}$, which obviously belongs to $\mathcal{B}_0(\mathcal{M})$, and we have
$$  T_N(z)-T(z)=\int_{\T^d}(T \circ R_{-\theta}-T)(z)\cdot F_N\left(e^{i\theta_1},\cdots,e^{i\theta_d}\right)d\theta,\quad z\in \C^d. $$
We claim that
\begin{equation*}
\lim_{N\rightarrow\infty}\left\|T_N-T\right\|_{\mathcal{B}(\mathcal{L}(\mathcal{H}))}=\lim_{N\rightarrow\infty}\sup_{z\in\C^d} \|Q_{T_N-T}(z)\|_{{\mathcal L}({\mathcal H})} ^{1/2}=0.
\end{equation*}
For fixed $z\in\C^d$, $N_z(T)= \left\|Q_{T}(z)\right\|_{{\mathcal L}({\mathcal H})} ^{1/2}$ defines a semi-norm on $\mathcal{L}(\mathcal{H})$ by (\ref{itz}). Thus
\begin{equation}\label{fejer11}
\left\|Q_{T_N-T}(z)\right\|_{{\mathcal L}({\mathcal H})} ^{1/2}\leq\int_{\T^d}\left\| Q_{(T \circ R_{-\theta}-T)}(z)\right\|_{{\mathcal L}({\mathcal H})} ^{1/2}
\cdot F_N\left(e^{i\theta_1},\cdots,e^{i\theta_d}\right)d\theta.
\end{equation}
Then
\begin{eqnarray}\label{fejer000}
\|Q_{T_N-T}(z)\|_{{\mathcal L}({\mathcal H})} ^{1/2}\le  \int_{ V_\delta }+ \int_{\T^d\setminus V_\delta }\| Q_{(T \circ R_{-\theta}-T)}(z)\|_{{\mathcal L}({\mathcal H})} ^{1/2}
\cdot F_N\left(e^{i\theta_1},\cdots,e^{i\theta_d}\right)d\theta, 
\end{eqnarray}
where $V_\delta$  $(\delta>0)$ denotes the neighborhood of $0$ given by
$$V_\delta:=\left\{\left(\theta_1,\cdots,\theta_d\right)\ :\ \ \left|\theta_j\right|\leq \delta,\, 1\le j\le d\right\}. $$
Now let $\varepsilon>0$.
By the properties of $F_N$, there exists $N_0\in\N$ such that 
\begin{equation}\label{fejer01}
\int_{\T^d\setminus V_\delta }F_N\left(e^{i\theta_1},\cdots,e^{i\theta_d}\right)d\theta\leq\varepsilon, \quad N>N_0.
\end{equation}
Since $T \in\mathcal{B}_0\left(\mathcal{M}\right)$, we may choose $R>0$ such that
$$ \sup_{|z|> R}\left\|Q_{T}(z)\right\|_{{\mathcal L}({\mathcal H})} ^{1/2}< \varepsilon.$$
Then relation (\ref{itz}) together with  the rotation invariance $Q_{T\circ R_{\theta}}(z)=Q_{T}(R_{\theta} z)$  imply
\begin{equation}\label{fejer-1}
\sup_{|z|> R}\left\|Q_{(T\circ R_{\theta}-T)}(z)\right\|_{{\mathcal L}({\mathcal H})} ^{1/2}\leq \sup_{|z|> R}\left\|Q_{T\circ R_{\theta}}(z)\right\|_{{\mathcal L}({\mathcal H})} ^{1/2}+\sup_{|z|> R}\left\|Q_{T}(z)\right\|_{{\mathcal L}({\mathcal H})} ^{1/2}<  2\varepsilon.
\end{equation}
Again, from the rotation invariance of  $Q_{T}$ and the uniform continuity of $D_j T$ on every compact set $\left\{z\in\C^d,\ \left|z\right|\leq R\right\}$, $R>0$, we obtain 
$$
\lim_{\theta\to 0} \sup_{|z|\leq R}\left\|Q_{(T\circ R_{\theta}-T)}(z)\right\|_{{\mathcal L}({\mathcal H})} =0.
$$
Then
we may choose $\delta$ small enough such that  
\begin{equation}\label{fejer-2}
 \sup_{|z|\leq R}\left\|Q_{(T\circ R_{\theta}-T)}(z)\right\|_{{\mathcal L}({\mathcal H})} ^{1/2} < \varepsilon, \quad \theta\in V_\delta.
\end{equation}
Using relations (\ref{fejer01}) and (\ref{fejer-2}) in (\ref{fejer000}) yields
$$
\|T_N-T\|_{\mathcal{B}(\mathcal{L}(\mathcal{H}))}=\sup_{z\in\C^d}\|Q_{T_N-T}(z)\|_{{\mathcal L}({\mathcal H})} ^{1/2}\le 2\varepsilon +2\varepsilon\|T\|_{\mathcal{B}(\mathcal{L}(\mathcal{H}))},
$$
for $N>N_0$, which validates the claim, and, thus, completes the proof.
\end{proof}

\section{Boundedness of Hankel operators}
In this section we prove different characterizations of the boundedness of the big Hankel operator. We shall use the notation  
${\mathcal F}_\varphi^2(\mathcal{L}(\mathcal{H}))$ for the Fock space of holomorphic functions
$f: \C^d \to {\mathcal L}({\mathcal H})$ 
that satisfy
$\Vert f(\cdot)\Vert_{{\mathcal L}({\mathcal H})} \in L_\varphi^2(\C^d)$.

\begin{proof}[Proof of Theorem \ref{boundedness}]
\ \\

{\bf Implication} ${\bf (b)\Rightarrow(a)}$. 
Assume that $T \in {\mathcal B}({\mathcal L}({\mathcal H}))$. Lemma \ref{integral} shows that $T \in {\mathcal T}_\varphi({\mathcal L}({\mathcal H}))$. Let $f$ be a holomorphic polynomial in 
$\C^d$ with coefficients in ${\mathcal H}$ and let $\{e_i\}_{i \ge 1}$ be an orthonormal basis of ${\mathcal H}$. Set
\begin{eqnarray*}
F_i(z):= \langle H_{T^\ast}f(z),\,e_i\rangle 
=(I-P_{\varphi})G_i\,(z) \,,
\end{eqnarray*}
where
$$G_i(z)= \langle T(z)^\ast f(z),\, e_i\rangle. $$
 The form
$$\Omega_i := \sum_{j=1}^d \langle f(z),\, D_jT(z)e_i\rangle d\overline{z}_j$$
is closed, that is, $\overline{\partial}\Omega_i=0$. For $1 \le j \le d$, we denote $\Omega_i^j:=\langle f(z),\, D_jT(z)e_i\rangle$. 
Notice that $F_i$ is the solution of minimal $L^2_\varphi(\C)$-norm of 
$$\overline{\partial} u = \Omega_i\,.$$

By a theorem due to H\"ormander (see \cite{berndtsson-c,SY}) it follows that 
\begin{equation}\label{hormander}
\int_{\C^d} |F_i|^2 \,  d\mu_\varphi \le \int_{\C^d} |\Omega_i|^2_{i\partial\bar\partial \varphi}\, d\mu_\varphi.
\end{equation}
Here $ |\Omega_i|_{i\partial\bar\partial \varphi}$  denotes the norm
of $\Omega_i$ measured in the K\"ahler metric defined by ${i\partial\bar\partial \varphi}$, that is
$$
|\Omega_i|^2_{i\partial\bar\partial \varphi}=\sum_{1\le j,k\le d} A^{jk} \Omega_i^j \overline{\Omega_i^k},
$$
where $(A^{jk}(z))_{1\le j,k\le d}$ is the inverse of the hermitian matrix $A(z)=(A_{jk}(z))_{1\le j,k\le d}:=\Bigl( \frac{\partial^2 \varphi}{\partial z_j\partial \bar z_k}(z)\Bigr)_{1\le j,k\le d}$.

Our next aim is to obtain an appropriate estimate for the  right-hand-side of (\ref{hormander}).
Setting 
$X_i:= (\langle f(z), D_1T(z) e_i\rangle,\, ... \,, \langle f(z), D_dT(z) e_i \rangle )\in\C^d$, we may rewrite the last relation above as
\begin{equation}\label{m1}
|\Omega_i|^2_{i\partial\bar\partial \varphi}=\langle (A^{-1}(z))^t X_i,X_i\rangle_{\C^d}=\langle \overline{A^{-1}(z)}X_i,X_i\rangle_{\C^d}.
\end{equation}
Let us now take a closer look at  $A(z)$. We have
\begin{equation*}
A(z)
=\Psi'(|z|^2)I+ (z_k\bar z_j \Psi''(|z|^2))_{1\le j,k \le d}, \quad z\in \C^d.
\end{equation*}
It follows that
$$
\overline{A(z)}
= \tilde\lambda(z) P_z +\tilde\mu(z)(I-P_z),
$$
where 
$$\tilde\lambda(z)= \Psi'(|z|^2)+|z|^2 \Psi''(|z|^2)\hbox{\ \  and\ \  }\tilde\mu(z)=\Psi'(|z|^2).$$
Relation \eqref{eigenv-comp} shows that $\tilde\lambda(z)\asymp\lambda(z)$ and $\tilde\mu(z)\asymp\mu(z)$.
We clearly have
\begin{equation}
\overline{A(z)}^{\,-1}=\overline{A(z)^{-1}}=\frac{1}{\tilde\lambda(z)} P_z+\frac{1}{\tilde\mu(z)}(I-P_z).
\end{equation}
From relation (\ref{inv}) we now deduce that
the matrices $\overline{A(z)}^{\,-1}$ and $B(z)^{-1}$ have the same eigenvectors and comparable eigenvalues, which implies that
the induced hermitian forms are comparable, i.e.
$$
\langle \overline{A(z)^{-1}}v,v\rangle\asymp \langle {B(z)^{-1}}v,v\rangle,
$$
where the involved constants are independent of $z,v\in\C^d$. Using this in (\ref{m1}) we get
\begin{equation}\label{LH}
|\Omega_i|^2_{i\partial\bar\partial \varphi}\asymp  \langle {B(z)^{-1}}X_i,X_i\rangle=\|B(z)^{-1/2}X_i\|^2.
\end{equation}
As in \eqref{cj}, we denote by $c_{jk}(z)$ the $jk$ entry of the (hermitian) matrix ${B(z)^{-1/2}}$, where $1\le j,k\le d$,
and 
\begin{equation*}
C_j (z)= \sum_{k=1}^d c_{kj}(z)\, D_k T(z).
\end{equation*}
 Writing down the components
of $B(z)^{-1/2}X_i$ with this notation, we deduce
\begin{eqnarray*}
\|B(z)^{-1/2}X_i\|^2&=&\sum_{j=1}^d \, \Bigl| \sum_{k=1}^d c_{jk}(z)\, \langle f(z), D_k T(z) e_i\rangle \Bigr|^2\\
 &=& 
 \sum_{j=1}^d \, \Bigl| \, \langle f(z), C_j(z) e_i\rangle \Bigr|^2= \sum_{j=1}^d |\langle C_j(z)^* f(z), e_i \rangle|^2.
\end{eqnarray*}
We now use the above equality in (\ref{LH}) and return to (\ref{hormander}) to deduce
\begin{equation}
\int_{\C^d} |F_i|^2 \,  d\mu_\varphi \lesssim \int_{\C^d} \sum_{j=1}^d |\langle C_j(z)^* f(z), e_i \rangle|^2  \,  d\mu_\varphi (z).
\end{equation}
Summing up over $i$ and applying the monotone convergence theorem yield
\begin{eqnarray}\label{m4}
\|H_{T^\ast}f\|^2&=& \sum_{i=1}^\infty \int_{\C^d} |F_i|^2\,  d\mu_\varphi \lesssim  \sum_{j=1}^d \int_{\C^d} \sum_{i=1}^\infty|\langle C_j(z)^* f(z), e_i \rangle|^2  \,  d\mu_\varphi(z)\nonumber \\
&=& \sum_{j=1}^d \int_{\C^d} \| C_j(z)^* f(z)\|^2  \,  d\mu_\varphi(z) =\int_{\C^d}  \langle  \sum_{j=1}^d C_j(z)  \Big(C_j(z)\Big)^* f(z), f(z)\rangle  \,  d\mu_\varphi(z)\nonumber\\
&=& \int_{\C^d} \|\Big(\sum_{j=1}^d  C_j(z) \Big( C_j(z)\Big)^*\Big)^{1/2}f(z)\|^2\, d\mu_\varphi(z)\nonumber\\
&\le& (\sup_{z\in\C^d} \|Q_T(z)\|_{{\mathcal L}({\mathcal H})}  )\|f\|^2_\varphi.  
\end{eqnarray}
where, from relation  \eqref{QT-Cj}, we have $Q_T(z)=\sum_{j=1}^d  C_j(z)  (C_j(z))^*$. Hence 
$$
\Vert H_{T^\ast}f \Vert \lesssim (\sup_{z \in \C^d} \Vert Q_T(z)\Vert_{{\mathcal L}({\mathcal H})} ^{1/2}) \Vert f \Vert_\varphi \\
\le \Vert T\Vert_{{ \mathcal{B}\left(\mathcal{L} \left(\mathcal{H}\right)\right)}} \Vert f \Vert_\varphi\,,
$$
and $(b)\Rightarrow(a)$ is proven.

{\bf Implication} ${\bf (a)\Rightarrow(c)}$. Suppose $H_{T^\ast}$ is bounded. For $w \in \C^d$ and $e \in {\mathcal H}$ with $\Vert e \Vert=1$, notice 
that by the reproducing formula, we have
$$H_{T^\ast}(K_w \,e)\,(z)=( T(z)^\ast - T(w)^\ast) e \cdot K_w(z)\,,$$
where $K_w(z)=K(z,w)$ is the reproducing kernel of ${\mathcal F}_\varphi^2(\C)$. 
On the other hand, since $T\in {\mathcal T}_\varphi({\mathcal L}({\mathcal H}))$, 
the reproducing formula in
${\mathcal F}_\varphi^2(\mathcal{L}(\mathcal{H}))$ (which follows from its scalar version via bounded linear functionals)  yields
$\tilde T=T$, $ \tilde T^*=T^*$ and  
\begin{gather}\label{mo and HTkz}
 MO^2 T^*(z)=\int_{\C^d}\left(T(z)-T(w)\right)\left(T(z)-T(w)\right)^*\left|k_z(w)\right|^2d\mu_{\varphi}(w).\nonumber
\end{gather}
In particular this shows that $MO^2 T^*(z)$ is a positive operator.
Combining the last two equalities above we deduce
\begin{equation}\label{m-osc}
\left\langle MO^2 T^*(z)e,e\right\rangle_{\mathcal{H}} =
\int_{\C^d} \Vert ( T(z)^\ast - T(w)^\ast) e \Vert^2 |k_z(w)|^2\,d\mu_\varphi(z)=
\left\|H_{T^*}(k_ze)\right\|^{2}_{\mathcal F_\varphi^2},\  e\in\mathcal H.
\end{equation}
Taking the supremum over unit vectors $e\in\mathcal{H}$ in the last relation above, we obtain
$$
\left\|MO^2 T^*(z)\right\|^{1/2}_{\mathcal{L} \left(\mathcal{H}\right)}= \sup_{\left\|e\right\|=1}\left\|H_{T^*}(k_ze)\right\|_{\mathcal F_\varphi^2}\le \| H_{T^*}\|, \quad  z\in\C^d,
$$
and $(c)$ follows.

{\bf Implication} ${\bf (c)\Rightarrow(b)}$.
In order to do this, we are going to show that
\begin{equation*}
 \left\|Q_T(z)\right\|_{\mathcal{L} \left(\mathcal{H}\right)}\lesssim\left\|MO^2 T^*(z)\right\|_{\mathcal{L} \left(\mathcal{H}\right)}.
\end{equation*}
Recall from above that
\begin{equation}\label{MO-Hankel}
\left\|MO^2 T^*(z)\right\|_{\mathcal{L} \left(\mathcal{H}\right)}=\sup_{\left\|e\right\|=1}\left\|H_{T^*}(k_ze)\right\|^{2}_{\mathcal F_\varphi^2}.
\end{equation}
Now, for $w\in \C^d$, we have
\begin{eqnarray}
\Vert H_{T^\ast}(K_w\,e) \Vert^2 &=& \int_{\C^d} \Vert ( T(z)^\ast - T(w)^\ast) e \Vert^2 |K(z,w)|^2\,d\mu_\varphi(z) \label{ast} \\
&\ge& \int_{D(w,a)} \Vert ( T(z)^\ast - T(w)^\ast) e \Vert^2 |K(z,w)|^2\,d\mu_\varphi(z) \nonumber
\end{eqnarray}
where $D(w,a)$ was defined in \eqref{polyball}. Lemma 7.2 and Lemma 7.1 in \cite{SY} ensure that for 
$a>0$ small enough
$$|K(z,w)|^2 \sim K(z,z)\,K(w,w)\,,\qquad w \in \C^d,\  z \in D(w,a)\,.$$
From this we deduce (as in the proof of Theorem D in \cite{SY}) that for $a>0$ small enough we have
$$|K(z,w)|^2 e^{-\varphi(z)} \gtrsim \frac{K(w,w)}{|D(w,a)|}\,,\qquad z \in D(w,a),\,$$
where  $|S|$ denotes the euclidean volume of a set $S\subset\C^d$.\\
Using the last inequality in \eqref{ast}    we obtain
\begin{equation}\label{z1}
\frac{1}{|D(w,a)|} \int_{D(w,a)} \Vert ( T(z)^\ast - T(w)^\ast) e \Vert^2 \,dm_d(z) 
\lesssim \Vert H_{T^\ast}(k_w\, e) \Vert^2\,.
\end{equation} 
We shall now show that the expression on the left-hand-side 
of the above inequality  is bounded below by a constant multiple 
of $\Vert Q_T(w)^{1/2}e\Vert^2$ for $w \in \C^d$. In order to do this, for $h \in {\mathcal H}$ with $\Vert h \Vert=1$, consider the 
holomorphic scalar-valued function
$$f(z):= f_{w,e,h}(z)=\langle (T(z)-T(w))h,\,e\rangle_{\mathcal H}\,,\qquad z \in \C^d\,.$$
Notice that
$$\Vert ( T(z)^\ast - T(w)^\ast) e \Vert = \sup_{\Vert h \Vert=1} |f(z)|\,.$$

We first prove the desired estimates for $w=(w_1,\,0,\,\cdots,\,0)$. The result in the general case will then 
follow by unitary invariance. So we first assume that $w=(w_1,\,0) \in \C^d$, where $w_1 \in \C$. 
In this case, the set $D(w,a)$ reduces to $B_1(w_1, a\rho_1(w))\times B_{d-1}(0, a\rho_2(w))$, where 
$B_k(z,R)$ denotes the euclidian ball in  $\C^k$, $k\ge 1$, centred at $z\in\C^k$ and of radius $R>0$, 
and 
\begin{equation}\label{radii}
\rho_1(z)=[\Phi'(|z|^2)]^{-1/2}, \ 
\rho_2(z)= [\Psi'(|z|^2)]^{-1/2}, \quad z\in \C^d.
\end{equation}


By Cauchy's formula,  the Cauchy-Schwarz inequality and subharmonicity, we now get
\begin{eqnarray}\label{****}
(a\rho_1(w))^2 |D_1f(w_1,0)|^2 \lesssim  \frac{1}{|D(w,a)|} \int\limits_{D(w,a)} |f(z)|^2 \,dm_d(z)\,.
\end{eqnarray}
Now notice that
$$|w|\cdot|D_1f(w)| =|\langle w_1 D_1T(w)h,\,e\rangle| =|\langle RT(w)h,e\rangle|\,.$$
In view of the above and \eqref{****}, we may write
 $$\rho_1(w)^2 \,|\langle RT(w)h,e\rangle|^2 \lesssim \frac{|w|^2}{|D(w,a)|} \int\limits_{D(w,a)} |\langle (T(z)-T(w))h,\,e\rangle | ^2\,dm_d(z)\,.$$ 
In light of relations \eqref{eigenv-comp} and \eqref{radii}, we have $\lambda(w) \asymp\rho_1(w)^{-2}$.
Taking the supremum over $\Vert h\Vert=1$ we obtain
\begin{equation}\label{R1}
\frac{1}{|w|^2} \cdot \frac{1}{\lambda(w)} \,\|(RT(w))^\ast e \|^2 \lesssim \frac{1}{|D(w,a)|} \int\limits_{D(w,a)} \Vert (T(z)^\ast-T(w)^\ast)e \Vert^2 \,dm_d(z)\, .
\end{equation}
This last estimate suffices in case $d=1$. If $d>1$,
it remains to estimate the tangential term in $Q_T(w)$. We start with the observation that since $w=(w_1,0)$, we have
$$T_{ij}(T)(w)=0\quad\text{if}\quad 1<i<j\le d \quad\text{and}\quad T_{1j}(T)(w)=\overline{w_1}\,D_jT(w)\quad\text{for}\quad 1<j\le d \,,$$
so that, in order to estimate the tangential term in $Q_T(w)$, we only need to handle terms of the form $\overline{w_1}\,D_jT(w)$. To do this we first write the Cauchy formula for the function 
$\C^{d-1} \ni z'=(z_2,\cdots,z_d) \mapsto f(z_1,z')$. We have
$$f(z_1,rz')=\int_{S_{d-1}} \frac{f(z_1,r\zeta)}{(1-\langle z',\zeta\rangle)^{d-1}}\,d\sigma(\zeta)\,,$$
where $r >0$, $S_{d-1}$ denotes the unit sphere in $\C^{d-1}$,  and $d\sigma$ is the Lebesgue measure on $S_{d-1}$. 
Differentiating with respect to $z_j$, $2 \le j \le d$, at the point $z'=0 \in \C^{d-1}$ and applying the Cauchy-Schwarz inequality, we 
deduce
$$r^2\,|D_jf(z_1,0)|^2 \lesssim \int_{S_{d-1}} |f(z_1,r\zeta)|^2\,d\sigma(\zeta)\,.$$
Using spherical coordinates in $\C^{d-1}$ together with the subharmonicity
of $z_1 \mapsto |D_jf(z_1,0)|^2$, we infer
$$(\rho_2(w))^2 |D_jf(w_1,0)|^2 \lesssim \frac{1}{|D(w,a)|}\int\limits_{D(w,a)} |f(z)|^2\,dm_d(z)\,.$$
Making $f$ explicit now yields
$$(\rho_2(w))^2 |\langle D_jT(w_1,0)h,\,e\rangle|^2 \lesssim \frac{1}{|D(w,a)|}\int\limits_{D(w,a)} 
|\langle (T(z)-T(w))h,\,e\rangle|^2\,dm_d(z)\,.$$
As before, take now the supremum over $h \in {\mathcal H}$ with $\Vert h \Vert=1$ and use the fact that
 $\mu(z)\asymp \rho_2(z)^{-2}$ (see relations \eqref{eigenv-comp} and \eqref{radii}), to deduce
\begin{equation}\label{R2}
\frac{1}{\mu(w)} \, \Vert D_jT(w)^\ast e\Vert^2 \lesssim \frac{1}{|D(w,a)|}\int\limits_{D(w,a)} 
\| (T(z)^*-T(w)^*)e\|^2\,dm_d(z)\,.
\end{equation}
Combining \eqref{R1} and \eqref{R2} and taking into account the form of $Q_T(w)$, we obtain
\begin{eqnarray}\label{t1}
\Vert Q_T(w)^{1/2} e\Vert^2 &=& \frac{1}{|w|^2 \lambda(w)} \,\Vert RT(w)^* e\Vert^2 + \frac{1}{|w|^2 \mu(w)} \,\sum_{i<j} \Vert \Big(T_{ij}(T)(w)\Big)^* e\Vert^2 \nonumber \\
&\lesssim&  \frac{1}{|D(w,a)|} \int_{D(w,a)} \Vert (T(z)^\ast -T(w)^\ast)e\Vert^2\,dm_d(z)\,.
\end{eqnarray}
We now treat the general case, that is, we let $w\in \C^d$ be arbitrary. Denote $\tilde w=(|w|,0)\in \C^d$ and let $U$ be a the unitary transformation
of $\C^d$ that maps $\tilde w$ to $w$.
Then by unitary invariance we have
$Q_T(w)=Q_{T\circ U}(U^{-1} w)=Q_{T\circ U}(\tilde w)$.
We may now make use of relation (\ref{t1}) applied to $T\circ U$, perform  the change of variables $\zeta=U z$ and take into account the fact that 
$U(D(\tilde w,a))=D(w,a)$ to deduce that \eqref{t1} holds in general.
This last fact together with relations \eqref{m-osc} and (\ref{z1}) leads us to 
\begin{equation}\label{kern-ev}
\Vert Q_T(w)^{1/2} e\Vert \lesssim \|H_{T^*} (k_w e)\|= \| \left(MO^2 T^*(w)\right)^{1/2}e \|_{\mathcal{H}}
 ,\quad {w\in \C^d},\, e\in\mathcal{H}, \|e\|_\mathcal{H}=1.
\end{equation}
Thus 
 \begin{equation}\label{Bloch-BMO}
  \Vert Q_T(w)^{1/2} \Vert_{\mathcal{L} \left(\mathcal{H}\right)}\lesssim \| MO^2 T^*(w) \|^{1/2}_{\mathcal{L} \left(\mathcal{H}\right)}
 \end{equation}
and, with this, 
our proof is complete.
\end{proof}

\begin{remark}
As already mentioned in the introduction,
the inequalities in
\eqref{m4} will be crucial in the characterizations of compactness and  Schatten class membership of the Hankel operator.
\end{remark}

\begin{corollary}
For a holomorphic function $T:\C^d\rightarrow \mathcal{L} \left(\mathcal{H}\right)$, we define
\begin{equation}\label{norm-berg}
	\left\|T\right\|_{Berg\left(\mathcal{L} \left(\mathcal{H}\right)\right)}:=\sup_{z,w\in\C^d}\frac{\left\|T(z)-T(w)\right\|_{\mathcal{L} \left(\mathcal{H}\right)}}{d_{\Psi}(z,w)}+\left\|T(0)\right\|_{\mathcal{L} \left(\mathcal{H}\right)},
\end{equation}
Then $\left\|\cdot\right\|_{Berg\left(\mathcal{L} \left(\mathcal{H}\right)\right)}$ is an equivalent norm on ${ \mathcal{B}\left(\mathcal{L} \left(\mathcal{H}\right)\right)}$.
\end{corollary}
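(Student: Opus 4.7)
The plan is to establish the two-sided comparison $\|T\|_{\mathcal{B}(\mathcal{L}(\mathcal{H}))} \asymp \|T\|_{Berg(\mathcal{L}(\mathcal{H}))}$; since the separation property of a norm is handled by the $\|T(0)\|$ summand and the triangle inequality for the first summand follows from the triangle inequality on $\mathcal{L}(\mathcal{H})$, the corollary will then be immediate.

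The direction $\|T\|_{Berg} \lesssim \|T\|_{\mathcal{B}}$ is free from what is already in the paper: Proposition \ref{schur}(b) asserts $\|T(z)-T(w)\|_{\mathcal{L}(\mathcal{H})} \lesssim \|T\|_{\mathcal{B}(\mathcal{L}(\mathcal{H}))}\,d_\Psi(z,w)$ for all $z,w \in \C^d$, so dividing by $d_\Psi(z,w)$ (for $z \neq w$), taking the supremum, and adding $\|T(0)\|$ yields the claim.

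For the reverse bound I would invoke the equivalent norm of Proposition \ref{schur}(a), which reduces matters to estimating
$$\bigl\|\textstyle\sum_{k=1}^d \xi_k D_k T(z)\bigr\|_{\mathcal{L}(\mathcal{H})}\big/\beta(z,\xi)$$
by $\|T\|_{Berg}$, uniformly in $z \in \C^d$ and $\xi \neq 0$. Since $T$ is holomorphic, the directional derivative is a complex limit,
$$\sum_{k=1}^d \xi_k D_k T(z) = \lim_{t \to 0,\,t\in\C} \frac{T(z+t\xi) - T(z)}{t}.$$
Using the straight line $\gamma_t(s) = z + st\xi$, $s\in[0,1]$, as competitor in the infimum defining $d_\Psi$, together with the $\C$-homogeneity $\beta(\cdot, t\xi) = |t|\beta(\cdot,\xi)$, I would get
$$d_\Psi(z+t\xi, z) \le |t| \int_0^1 \beta(z+st\xi, \xi)\,ds.$$
Combining this with $\|T(z+t\xi)-T(z)\|_{\mathcal{L}(\mathcal{H})} \le \|T\|_{Berg}\,d_\Psi(z+t\xi,z)$, dividing by $|t|$ and sending $t \to 0$, continuity of $\beta(\cdot,\xi)$ in its first argument (inherited from the real-analyticity of $\log K(\cdot,\cdot)$ on the diagonal) then produces
$$\bigl\|\textstyle\sum_{k=1}^d \xi_k D_k T(z)\bigr\|_{\mathcal{L}(\mathcal{H})} \le \|T\|_{Berg}\,\beta(z,\xi).$$
Taking the supremum over $z$ and $\xi \neq 0$, and observing $\|T(0)\|_{\mathcal{L}(\mathcal{H})} \le \|T\|_{Berg}$, Proposition \ref{schur}(a) concludes $\|T\|_{\mathcal{B}} \lesssim \|T\|_{Berg}$.

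The only step requiring a whiff of care is the limit passage, which is routine once one knows the Bergman matrix $B$ depends smoothly on $z$; no input from the growth hypotheses on $\Psi$ or the reproducing-kernel estimates from \cite{SY} is needed beyond what is used to set up Proposition \ref{schur}.
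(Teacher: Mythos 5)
Your proposal is correct, and for the substantive direction it takes a genuinely different route from the paper. The easy direction $\left\|T\right\|_{Berg\left(\mathcal{L}\left(\mathcal{H}\right)\right)}\lesssim\left\|T\right\|_{\mathcal{B}\left(\mathcal{L}\left(\mathcal{H}\right)\right)}$ is handled exactly as in the paper, via Proposition \ref{schur}(b). For the converse, the paper argues operator-theoretically: it bounds $\left\|H_{T^*}f(z)\right\|$ by the Lipschitz quotient times $Af(z)$, where $A$ is the sublinear integral operator with kernel $d_\Psi(z,w)\left|K(z,w)\right|$ shown to be $L^2(d\mu_\varphi)$-bounded in Section 5 of \cite{SY}, and then invokes Theorem \ref{boundedness} to pass from $\left\|H_{T^*}\right\|$ back to $\left\|T\right\|_{\mathcal{B}\left(\mathcal{L}\left(\mathcal{H}\right)\right)}$. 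You instead differentiate the Lipschitz estimate directly: taking the straight segment as competitor in the infimum defining $d_\Psi$, using the homogeneity $\beta(\cdot,t\xi)=|t|\beta(\cdot,\xi)$, and letting $t\to 0$ (legitimate, since $B(\cdot)$ is continuous and $T$ is norm-holomorphic), you obtain $\bigl\|\sum_k\xi_kD_kT(z)\bigr\|_{\mathcal{L}(\mathcal{H})}\le\left\|T\right\|_{Berg}\beta(z,\xi)$ pointwise and conclude with Proposition \ref{schur}(a). This is the classical ``metric Lipschitz bound implies gradient bound'' argument; it is more elementary and self-contained, needing neither the reproducing-kernel estimates behind the boundedness of $A$ nor Theorem \ref{boundedness}, and it even yields the sharper conclusion with no dimensional constant in that step. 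What it does not buy you is the intermediate inequality $\left\|H_{T^*}\right\|\le\left\|A\right\|\sup_{z\neq w}\left\|T(z)-T(w)\right\|_{\mathcal{L}(\mathcal{H})}/d_\Psi(z,w)$, which is of independent interest in the paper's scheme. Your remarks on why \eqref{norm-berg} is actually a norm (separation via the $\left\|T(0)\right\|$ term once the Lipschitz seminorm forces $T$ constant) are adequate. Two cosmetic points: the supremum in \eqref{norm-berg} should of course be restricted to $z\neq w$, and in the limit passage you should note that $\int_0^1\beta(z+st\xi,\xi)\,ds\to\beta(z,\xi)$ by uniform continuity of $\beta(\cdot,\xi)$ on a compact neighborhood of $z$.
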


\begin{proof} Let $T\in{ \mathcal{B}\left(\mathcal{L} \left(\mathcal{H}\right)\right)}$.
Proposition \ref{schur} $(b)$ immediately gives
 $$
 \left\|T\right\|_{Berg\left(\mathcal{L} \left(\mathcal{H}\right)\right)}\lesssim\|T\|_{{ \mathcal{B}\left(\mathcal{L} \left(\mathcal{H}\right)\right)}}
$$
On the other hand, as in Section 5 in \cite{SY}, we have
$$ \left\|H_{T^*}f(z)\right\|\leq\sup_{z,w\in\C^d} \frac{\|T(z)-T(w)\|_{ \mathcal{L} (\mathcal{ H})}}{d_{\Psi}(z,w)}Af(z),$$
where the sublinear operator $A$ defined as
$$ Af(z):=\int_{\C^d}d_{\Psi}(z,w)\left|K_{\Psi}(z,w)\right|\left\|f(w)\right\|d\mu_{\varphi}(w),\ z\in\C^d  $$
is bounded on $L^{2}\left(d\mu_{\varphi}\right)$. Therefore 
$$ \left\|H_{T^*}\right\|\leq \sup_{z,w\in\C^d} \frac{\|T(z)-T(w)\|_{ \mathcal{L} (\mathcal{ H})}}{d_{\Psi}(z,w)}\left\|A\right\|, $$
which, together with  Theorem \ref{boundedness} shows that
$$
\|T\|_{{ \mathcal{B}\left(\mathcal{L} \left(\mathcal{H}\right)\right)}}\lesssim \left\|T\right\|_{Berg\left(\mathcal{L} \left(\mathcal{H}\right)\right)}.
$$
\end{proof}

\section{Compactness of Hankel operators}

 Recall that ${\mathcal K}({\mathcal H})$ stands for the space of compact linear operators on $\mathcal{H}$.
\begin{lemma}\label{truncation}
Given $S: {\mathbb C}^d \to {\mathcal K}({\mathcal H})$ holomorphic and $R>0$, the operator 
$M_{S^\ast}^R: {\mathcal F}_\varphi^2({\mathcal H}) \to L_\varphi^2({\mathcal H})$ defined by
$$M_{S^\ast}^R f(z)=\chi_{\{\xi:\ |\xi| \le R\}}(z)\,S^\ast(z)f(z)\,,\qquad z \in {\mathbb C}^d\,,\quad f \in 
{\mathcal F}_\varphi^2({\mathcal H})\,,$$
is compact.
\end{lemma}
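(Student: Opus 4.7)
The plan is to prove compactness by the standard weak-to-strong conversion made possible by compactness of the values $S(z)$. Let $\{f_n\}$ be a bounded sequence in $\mathcal{F}_\varphi^2(\mathcal{H})$. Since $\mathcal{F}_\varphi^2(\mathcal{H})$ is a Hilbert space, after extracting a subsequence (still denoted $\{f_n\}$) we may assume $f_n \rightharpoonup f$ weakly in $\mathcal{F}_\varphi^2(\mathcal{H})$. My goal is to show that $M_{S^\ast}^R f_n \to M_{S^\ast}^R f$ strongly in $L_\varphi^2(\mathcal{H})$.

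First I would use the pointwise bound \eqref{pointev} to pass from weak convergence in $\mathcal{F}_\varphi^2(\mathcal{H})$ to pointwise weak convergence in $\mathcal{H}$. Indeed, for each $z\in \C^d$ and $h\in\mathcal H$, the functional $g\mapsto \langle g(z),h\rangle_\mathcal{H}$ is bounded on $\mathcal{F}_\varphi^2(\mathcal{H})$ by \eqref{pointev}, so $\langle f_n(z)-f(z),h\rangle \to 0$, i.e.\ $f_n(z)\rightharpoonup f(z)$ weakly in $\mathcal{H}$. Since $S^\ast(z)\in \mathcal{K}(\mathcal{H})$ for every $z$, compact operators map weakly convergent sequences to norm convergent ones, and therefore
\[
\|S^\ast(z)f_n(z)-S^\ast(z)f(z)\|_\mathcal{H}\longrightarrow 0\quad\text{for every }z\in\C^d.
\]

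Next I would apply dominated convergence on the compact set $\{|z|\le R\}$. The uniform boundedness principle gives $\sup_n \|f_n\|_\varphi<\infty$, and by \eqref{pointev}, $\|f_n(z)\|_\mathcal{H}\le c(z)\sup_n\|f_n\|_\varphi$ with $c$ continuous, so $\{\|f_n(z)\|_\mathcal{H}\}$ is uniformly bounded on $\{|z|\le R\}$ by some constant $M$. Since $S$ is a holomorphic operator-valued function, $z\mapsto \|S(z)\|_{\mathcal{L}(\mathcal{H})}$ is continuous, hence bounded by some $N<\infty$ on $\{|z|\le R\}$. Consequently, the integrands
\[
\chi_{\{|z|\le R\}}(z)\,\|S^\ast(z)(f_n(z)-f(z))\|_\mathcal{H}^2
\]
are dominated by the $\mu_\varphi$-integrable function $4M^2N^2\,\chi_{\{|z|\le R\}}(z)$. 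The dominated convergence theorem then yields $\|M_{S^\ast}^R(f_n-f)\|_{L_\varphi^2(\mathcal{H})}\to 0$, proving compactness.

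I expect no serious obstacle: the only subtle ingredient is the passage from weak convergence in the Fock space to pointwise weak convergence in $\mathcal{H}$, which is immediate from the bounded point evaluation \eqref{pointev}. The compactness of $S^\ast(z)$ is the crucial hypothesis that converts pointwise weak convergence into pointwise norm convergence, and the cut-off $\chi_{\{|z|\le R\}}$ combined with the continuity of both $c(\cdot)$ and $\|S(\cdot)\|$ makes the dominated convergence argument routine.
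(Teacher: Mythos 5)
Your proof is correct, and it takes a genuinely different route from the paper's. You establish compactness directly by showing that $M_{S^\ast}^R$ is completely continuous: a weakly convergent sequence in ${\mathcal F}_\varphi^2({\mathcal H})$ converges weakly pointwise in ${\mathcal H}$ (via the bounded point evaluations \eqref{pointev}), the pointwise compactness of $S^\ast(z)$ (Schauder) upgrades this to pointwise norm convergence, and dominated convergence on the finite-measure set $\{|z|\le R\}$ finishes the job; since ${\mathcal F}_\varphi^2({\mathcal H})$ is a Hilbert space, complete continuity is equivalent to compactness. The paper instead argues by approximation of the symbol: it replaces $S$ by its Taylor polynomials $P_NS$, which converge to $S$ uniformly in operator norm on $\{|z|\le R\}$, reduces to monomial symbols $z^\nu F$ with $F$ compact, further approximates $F$ by finite-rank operators, and handles the resulting finite-rank case with Montel's theorem plus \eqref{pointev}. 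Your argument is more self-contained and makes transparent exactly where the compactness of the values $S(z)$ enters (the weak-to-norm conversion); the paper's argument is a reduction to elementary building blocks and exhibits $M_{S^\ast}^R$ as a norm limit of finite-rank-symbol operators, which some readers may find more structurally informative. Both proofs rest on the same two pillars: bounded point evaluation and local uniform control of $\Vert S(\cdot)\Vert_{{\mathcal L}({\mathcal H})}$ on the ball of radius $R$.
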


\begin{proof}
The proof relies on standard arguments.  For $N \in {\mathbb N}$, let $P_NS$ denote the Taylor polynomial of $S$
$$P_NS(z)
=\sum_{|\nu| \le N} K_\nu \cdot z^\nu\,,$$
where $\nu=(\nu_1,\cdots,\nu_d)\in\N^d$ and
$K_\nu$ are compact operators. 
Since 
$$
\lim_{N\rightarrow\infty} \|M_{S^\ast}^R-M_{(P_NS)^\ast}^R\|\le  \lim_{N\rightarrow\infty} \sup_{ |z| \le R} \|S(z)-P_NS(z)\|=0,
$$
in order to conclude, it is enough to show that $M_{S^\ast}^R$ is compact for $S(z)=z^\nu F$, where $F\in{\mathcal K}({\mathcal H})$ and  $\nu\in \N^d$. Moreover, since $F$ can be approximated in the operator norm by finite rank operators, we may assume that $F$ has finite
rank.  This last claim is a straightforward consequence of Montel's theorem together with relation \eqref{pointev}.


\end{proof}

\begin{theorem}\label{compactness}
Given an holomorphic function $T:\C^d\rightarrow {\mathcal L}({\mathcal H})$, the following are equivalent:
\begin{enumerate}
\item[$(a)$]  $T \in {\mathcal T}_\varphi({\mathcal L}({\mathcal H}))$ and the  Hankel operator $H_{T^\ast}$ 
is compact;
\item [$(b)$]$ T-T(0) \in {\mathcal B}_0\left({\mathcal K}({\mathcal H})\right)$;
\item [$(c)$]   $T-T(0):\C^d\rightarrow {\mathcal K}({\mathcal H})$ and $\displaystyle{\lim_{|z|\rightarrow\infty}\left\|MO^2 T^*(z)\right\|_{\mathcal{L} \left(\mathcal{H}\right)}=0}$.
\end{enumerate}
\end{theorem}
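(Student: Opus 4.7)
My plan is to establish the chain of implications $(b)\Rightarrow(a)\Rightarrow(c)\Rightarrow(b)$, using as the main engines the H\"ormander-based domination estimate \eqref{m4} and the pointwise inequality $Q_T(z)\lesssim MO^2 T^*(z)$ (in the sense of positive operators) encoded in \eqref{Bloch-BMO} from the proof of Theorem \ref{boundedness}. The implication $(c)\Rightarrow(b)$ will be immediate from \eqref{Bloch-BMO}: the little-oh decay of $\|MO^2 T^*(z)\|$ transfers to $\|Q_T(z)\|$, which combined with the $\mathcal{K}(\mathcal{H})$-valuedness of $T-T(0)$ gives $T-T(0)\in\mathcal{B}_0(\mathcal{K}(\mathcal{H}))$.

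For $(a)\Rightarrow(c)$, I will first verify that $k_{z_n}e_n\rightharpoonup 0$ in $\mathcal{F}_\varphi^2(\mathcal{H})$ whenever $|z_n|\to\infty$ and $\|e_n\|_\mathcal{H}=1$. Testing against a fixed $g\in\mathcal{F}_\varphi^2(\mathcal{H})$ gives $\langle k_{z_n}e_n,g\rangle=\langle e_n,g(z_n)\rangle/K(z_n,z_n)^{1/2}$; decomposing $g$ in an orthonormal basis of $\mathcal{H}$ and using density of polynomials together with the scalar reproducing bound $|g_j(z)|^2\le K(z,z)\|g_j\|^2$, dominated convergence yields $\|g(z)\|_\mathcal{H}/K(z,z)^{1/2}\to 0$. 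Compactness of $H_{T^*}$ then forces $\|H_{T^*}(k_{z_n}e_n)\|\to 0$, and choosing $e_n$ to nearly maximize $\langle MO^2 T^*(z_n)e,e\rangle$ over unit vectors together with identity \eqref{m-osc} delivers $\|MO^2 T^*(z_n)\|\to 0$. For the compact-valuedness of $T(z)-T(0)$, I will argue that the operator $A_z:e\mapsto H_{T^*}(k_ze)$ factors through the compact $H_{T^*}$ and is therefore compact, so $A_z^*A_z=MO^2 T^*(z)$ is compact. The inequality $Q_T(z)\lesssim MO^2 T^*(z)$ together with Douglas' factorization (any positive operator dominated by a compact positive operator is itself compact) gives compactness of $Q_T(z)$; consequently each $C_j(z)$ from \eqref{cj} is compact (since $C_j(z)C_j(z)^*\le Q_T(z)$), each $D_kT(z)$ is a scalar linear combination of the $C_j(z)$ and hence compact, and finally $T(z)-T(0)=\int_0^1\sum_k z_k(D_kT)(tz)\,dt$ is compact as a Bochner integral of a continuous $\mathcal{K}(\mathcal{H})$-valued function.

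For the main implication $(b)\Rightarrow(a)$, the plan is to combine the H\"ormander estimate \eqref{m4} in the form $\|H_{T^*}f\|^2\lesssim \|M_{Q_T^{1/2}}f\|^2_{L_\varphi^2(\mathcal{H})}$ with compactness of the multiplication operator $M_{Q_T^{1/2}}:\mathcal{F}_\varphi^2(\mathcal{H})\to L_\varphi^2(\mathcal{H})$ defined by $(M_{Q_T^{1/2}}f)(z)=Q_T(z)^{1/2}f(z)$. Since $T-T(0)$ is holomorphic and $\mathcal{K}(\mathcal{H})$-valued, each $D_kT(z)$ is compact, hence $Q_T(z)^{1/2}$ is compact-positive, while $\|Q_T(z)\|\to 0$ by hypothesis. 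Splitting $M_{Q_T^{1/2}}=M^R+M^{R,c}$ with $M^R$ truncating to $\{|z|\le R\}$, the tail obeys $\|M^{R,c}\|\le\sup_{|z|>R}\|Q_T(z)\|^{1/2}\to 0$. For compactness of $M^R$, any bounded sequence $f_n\subset\mathcal{F}_\varphi^2(\mathcal{H})$ admits a weakly convergent subsequence $f_{n_k}\rightharpoonup f$; testing against the reproducing kernels $K(\cdot,z)u$ yields pointwise weak convergence $f_{n_k}(z)\rightharpoonup f(z)$ in $\mathcal{H}$, and compactness of $Q_T(z)^{1/2}$ upgrades this to pointwise norm convergence $Q_T(z)^{1/2}f_{n_k}(z)\to Q_T(z)^{1/2}f(z)$. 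The pointwise bound $\|Q_T(z)^{1/2}(f_{n_k}(z)-f(z))\|\le 2\,c(z)\sup_{|z|\le R}\|Q_T(z)\|^{1/2}\sup_n\|f_n\|_\varphi$, together with $\int_{|z|\le R}c(z)^2\,d\mu_\varphi<\infty$ from \eqref{pointev}, permits dominated convergence to conclude $M^Rf_{n_k}\to M^Rf$ in $L_\varphi^2(\mathcal{H})$, establishing compactness of $M^R$. Thus $M_{Q_T^{1/2}}$ is a norm limit of compact operators and is itself compact, and the domination $\|H_{T^*}f\|\lesssim\|M_{Q_T^{1/2}}f\|$ yields compactness of $H_{T^*}$ by the standard principle that a bounded operator dominated in norm by a compact one is itself compact. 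The main obstacle will be executing this dominated convergence argument in the vector-valued setting, specifically the weak-to-norm transfer through the compact multiplier $Q_T(z)^{1/2}$ and the verification of integrability of the pointwise majorant against $d\mu_\varphi$.
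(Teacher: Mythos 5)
Your proposal is correct and follows the same overall strategy as the paper's proof: the cycle $(b)\Rightarrow(a)\Rightarrow(c)\Rightarrow(b)$, driven by the H\"ormander estimate \eqref{m4}, the identity \eqref{m-osc}, the weak null convergence of $k_ze_z$ as $|z|\to\infty$, and the pointwise domination \eqref{Bloch-BMO}. The only local differences are that for $(b)\Rightarrow(a)$ you establish compactness of the truncated multiplier directly by a weak-to-norm plus dominated convergence argument instead of invoking Lemma \ref{truncation}, and for $(a)\Rightarrow(c)$ you obtain compactness of $T(z)-T(0)$ from the compactness of $MO^2T^*(z)$ together with $Q_T(z)\lesssim MO^2T^*(z)$ rather than from the estimate \eqref{number2} on $RT(z)^*$; both variants are sound.
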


\begin{proof}
{\bf Implication } ${\bf (b)\Rightarrow(a)}$. Given $f \in {\mathcal F}_\varphi^2({\mathcal H})$ and $R>0$, by relation \eqref{m4}, we have
\begin{eqnarray*}
\Vert H_{T^\ast} f \Vert^2 &\lesssim& \sum_{j=1}^d \int_{{\mathbb C}^d} \Vert C_j(z)^\ast f(z)\Vert^2 d\mu_\varphi(z) \\
&=& \sum_{j=1}^d \int_{|z| \le R} \Vert C_j(z)^\ast f(z) \Vert^2 d\mu_\varphi(z) 
+ \int_{|z| > R} \Vert \Big( \sum_{j=1}^d C_j(z)C_j(z)^\ast\Big)^{1/2}f(z)\Vert^2 d\mu_\varphi(z) \\
&\lesssim& \sum_{k=1}^d \int_{|z| \le R}  \Vert \Big( D_kT(z)\Big)^\ast f(z)\Vert^2 d\mu_\varphi(z) + 
\sup_{|z| \ge R} \Vert Q_T(z)\Vert_{{\mathcal L}({\mathcal H})} \cdot \Vert f \Vert_\varphi^2\,,
\end{eqnarray*}
where the last step above follows from the definition of $C_j$ (see relation \eqref{cj}) as well as from 
\eqref{QT-Cj}. Now let $\varepsilon>0$ be arbitrary and choose $R>0$ such that 
$\sup\limits_{|z|>R} \Vert Q_T(z)\Vert_{{\mathcal L}({\mathcal H})}  < \varepsilon$. Then
$$\Vert H_{T^\ast} f \Vert^2 \lesssim \sum_{k=1}^d \Vert M_{(D_kT)^\ast}^R f \Vert_\varphi^2 +\varepsilon 
\Vert f \Vert_\varphi^2\,,$$
where the operators $M_{(D_kT)^\ast}^R: {\mathcal F}_\varphi^2({\mathcal H}) \to L^2_\varphi({\mathcal H})$ are compact by 
Lemma \ref{truncation}.
The above relation clearly shows that $H_{T^\ast}$ is compact.


{\bf Implication} ${\bf (a)\Rightarrow (c)}$. Assume $H_{T*}$ is compact. 
We begin by showing that $T(z)\in\mathcal{K}(\mathcal{H})$. 
For any fixed $z\in\C^d$, define the operator $N(z):{\mathcal H}\rightarrow  L_\varphi^2(\mathcal{H})$ by
 \begin{equation}\label{def N}
	 N(z)e:=H_{T^*}(k_ze),\ e\in \mathcal H.
 \end{equation}
Since, for any fixed $z \in {\mathbb C}^d$ and any sequence 
$\{e_n\}_{n \ge 1}$ which converges weakly to $0$ in ${\mathcal H}$ we obviously have that $\{k_ze_n\}_{n\ge 1}$ converges 
weakly to $0$ in ${\mathcal F}_\varphi^2({\mathcal H})$,  the compactness of $H_{T^*}$ implies that $N(z)$ is compact.
From relation (\ref{kern-ev}) we have
\begin{equation}\label{number}
\langle Q_T(z)e,e\rangle \Vert \lesssim \Vert H_{T^\ast} (k_ze) \Vert^2\,,\quad  e \in {\mathcal H},\,z \in {\mathbb C}^d.
\end{equation}
Taking into account the definition of $Q_T(z)$, this implies
\begin{equation}\label{number2}
\Vert RT(z)^*e \Vert^2 \lesssim \left|z\right|^2\lambda(z)\cdot \Vert H_{T^\ast} (k_ze) \Vert^2=\left|z\right|^2\lambda(z)\cdot \| N(z)e\|^2\,,\quad e\in\mathcal{H},\, z \in {\mathbb C}^d.
\end{equation}
The compactness of $N(z)$ now ensures that $RT(z)^*$ and hence $RT(z)$
is compact for any $z \in {\mathbb C}^d$.
 Then 
$$T(z)-T(0)=\int_0^1 \frac{1}{t}\,RT(tz)\,dt$$
implies that $T(z)-T(0)$ is compact for any $z \in {\mathbb C}^d$. 
It remains to show that 
\begin{equation}\label{number3}
\lim_{|z| \to \infty} \Vert MO^2 T^*(z)  \Vert_{{\mathcal L}({\mathcal H})}  =0\,. 
\end{equation}
Since, for any fixed $z\in\C^d$, $N(z)$ is a compact operator on ${\mathcal H}$, it attains its norm, i.e. there 
exists $e_z \in {\mathcal H}$ with $\Vert e_z \Vert=1$ such that 
$$\left\|H_{T^*}(k_ze_z)\right\|_{\mathcal F_\varphi^2}= \left\|N(z)e_z\right\|_{\mathcal H}=\left\|N(z)\right\|_{{\mathcal L}({\mathcal H})}=  \left\| MO^2 T^*(z)\right\|^{1/2}_{\mathcal L({\mathcal H})},$$
where the last equality above follows from \eqref{m-osc} and \eqref{def N}. 
 Hence,  \eqref{number3} will immediately follow, once we show that
$\{k_ze_z\}$ converges weakly to $0$ in ${\mathcal F}_\varphi^2({\mathcal H})$ as $|z| \to \infty$. Indeed, 
for 
any holomorphic polynomial $f$ with coefficients in $\mathcal{H}$, we have
\begin{eqnarray*}
|\langle f,k_ze_z\rangle| &=& \Big| \int_{{\mathbb C}^d} \langle f(\xi),e_z\rangle\,\overline{k_z(\xi)}\,d\mu_\varphi(\xi)\Big| \\
&=& \frac{|\langle f(z),e_z\rangle|}{\Vert K_z \Vert} \to 0\quad\hbox{as}\quad |z| \to \infty,
\end{eqnarray*}
where the last step follows from the estimate $\Vert K_z \Vert\asymp e^{\Psi(|z|^2)/2} \Phi'(|z|^2)^{1/2}(\Psi'(|z|^2))^{(d-1)/2}$ (see \cite{SY}). The assertion for a general $f \in {\mathcal F}_\varphi^2({\mathcal H})$ is easily deduced from the above by approximation with polynomials.

In order to conclude, it is enough to prove that $(c)\Rightarrow(b)$, but this is a direct consequence of relation  \eqref{Bloch-BMO}.
\end{proof}

\section{Schatten classes}
The aim of this section is to characterize the Schatten class membership of $H_{T^*}$. 
We begin with an identity which we are going to formulate on  Fock spaces, although its analogue holds for a large 
class of vector-valued spaces of analytic functions.
\begin{lemma}\label{trace2} 
 Let $S$ be a positive operator on ${\mathcal F}_\varphi^2({\mathcal H})$,
and, for each fixed $z\in \C^d$, let $\{e_k^z\}_{k\ge 1}$ be an orthonormal basis of $\mathcal{H}$ 
(possibly) depending on $z$. 

\noindent Then $\sum_{k\ge 1}   \langle \, S(K_z e_k^z )\,,\, K_z e_k^z\,\rangle_{{\mathcal F}_\varphi^2({\mathcal H})}$ is independent of the choice of $\{e_k^z\}_{k\ge 1}$ and we have
\begin{equation}\label{trace}
trace(S)=\int_{\C^d}\sum_{k\ge 1}   \langle \, S(K_z e_k^z )\,,\, K_z e_k^z\,\rangle_{{\mathcal F}_\varphi^2({\mathcal H})}\, d\mu_\varphi(z),
\end{equation}
where $K_z$ denotes the reproducing kernel of ${\mathcal F}_\varphi^2(\C)$ at the point $z\in\C^d$.
\end{lemma}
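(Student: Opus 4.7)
The plan is to use the spectral square root to reduce everything to a Hilbert--Schmidt type calculation, and then apply Parseval twice (once in $\mathcal{F}_\varphi^2(\mathcal{H})$, once in $\mathcal{H}$) together with the reproducing property of $K_z$.

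Concretely, since $S\ge 0$, set $B:=S^{1/2}$, which is positive and self-adjoint on $\mathcal{F}_\varphi^2(\mathcal{H})$. The vector-valued reproducing formula (deduced in the introduction by testing with linear functionals on $\mathcal{H}$) gives, for every $G\in\mathcal{F}_\varphi^2(\mathcal{H})$, $z\in\mathbb{C}^d$, and $e\in\mathcal{H}$,
\[
\langle G,\,K_z e\rangle_{\mathcal{F}_\varphi^2(\mathcal{H})}=\langle G(z),\,e\rangle_{\mathcal{H}}.
\]
Fix an orthonormal basis $\{F_j\}_{j\ge 1}$ of $\mathcal{F}_\varphi^2(\mathcal{H})$. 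Taking $G=BF_j$ and using $B=B^*$, I get
\[
\langle F_j,\,BK_z e_k^z\rangle=\langle BF_j,\,K_z e_k^z\rangle=\langle (BF_j)(z),\,e_k^z\rangle_{\mathcal{H}}.
\]
Parseval in $\mathcal{F}_\varphi^2(\mathcal{H})$ then yields
\[
\langle SK_z e_k^z,\,K_z e_k^z\rangle=\|BK_z e_k^z\|_{\mathcal{F}_\varphi^2(\mathcal{H})}^{2}=\sum_{j\ge 1}\bigl|\langle (BF_j)(z),\,e_k^z\rangle_{\mathcal{H}}\bigr|^{2},
\]
and summing over $k$, Parseval in $\mathcal{H}$ applied to the orthonormal basis $\{e_k^z\}_k$ gives
\[
\sum_{k\ge 1}\langle SK_z e_k^z,\,K_z e_k^z\rangle=\sum_{j\ge 1}\|(BF_j)(z)\|_{\mathcal{H}}^{2}.
\]
The right-hand side is manifestly independent of the choice of $\{e_k^z\}$, which settles the basis-independence claim.

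To conclude, I integrate in $z\in\mathbb{C}^d$ against $d\mu_\varphi$. All integrands being nonnegative, Tonelli allows me to interchange sum and integral:
\[
\int_{\mathbb{C}^d}\sum_{k\ge 1}\langle SK_z e_k^z,\,K_z e_k^z\rangle\,d\mu_\varphi(z)=\sum_{j\ge 1}\int_{\mathbb{C}^d}\|(BF_j)(z)\|_{\mathcal{H}}^{2}\,d\mu_\varphi(z)=\sum_{j\ge 1}\|BF_j\|_{\mathcal{F}_\varphi^2(\mathcal{H})}^{2}.
\]
Since $\sum_j \|BF_j\|^2=\sum_j\langle B^*BF_j,F_j\rangle=\sum_j\langle SF_j,F_j\rangle=\mathrm{trace}(S)$, this is precisely \eqref{trace}.

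No real obstacle arises: positivity of $S$ ensures both the existence of the square root $B=S^{1/2}$ and the applicability of Tonelli, so the identity holds in $[0,+\infty]$ (it is automatic when $S$ is not trace-class, both sides being $+\infty$). The only care needed is to track the two different Parseval identities (in $\mathcal{F}_\varphi^2(\mathcal{H})$ and in $\mathcal{H}$) and to observe that self-adjointness of $B$ is what transfers the basis $\{F_j\}$ from $BK_z e_k^z$ to $(BF_j)(z)$ through the reproducing formula.
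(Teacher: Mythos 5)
Your proposal is correct and follows essentially the same route as the paper's proof: pass to $S^{1/2}$, expand $\|S^{1/2}E_n(z)\|^2$ (equivalently $\|S^{1/2}K_ze_k^z\|^2$) via the vector-valued reproducing formula, apply Parseval in both $\mathcal{F}_\varphi^2(\mathcal{H})$ and $\mathcal{H}$, and interchange sums and integral by nonnegativity. You merely run the computation in the opposite direction (from the integrand toward the trace), and you make the basis-independence explicit via the identity $\sum_k\langle SK_ze_k^z,K_ze_k^z\rangle=\sum_j\|(S^{1/2}F_j)(z)\|_{\mathcal H}^2$, which the paper leaves implicit.
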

\begin{proof}
If $\{E_n\}_{n\ge 1}$ is an orthonormal basis of ${\mathcal F}_\varphi^2({\mathcal H})$, then 
\begin{eqnarray}\label{trace1}
trace(S)
=\sum_{n\ge 1} \|S^{1/2}E_n\|^2=\sum_{n\ge 1} \int_{\C^d} \|(S^{1/2}E_n)(z)\|^2\, d\mu_\varphi(z).
\end{eqnarray}
We now have
\begin{eqnarray*}
\sum_{n\ge 1}  \|(S^{1/2}E_n)(z)\|^2&=&
\sum_{n\ge 1} \sum_{k\ge 1} |\langle (S^{1/2}E_n)(z), e_k^z \rangle|^2\\
&=&\sum_{n\ge 1} \sum_{k\ge 1} \left| \int_{\C^d}    \langle (S^{1/2}E_n)(\zeta), e_k^z \rangle\, \overline{K_z(\zeta)}\, d\mu_\varphi(\zeta)   \right|^2\\
&=&\sum_{n\ge 1} \sum_{k\ge 1} |\langle S^{1/2}E_n, K_z e_k^z \rangle_{{\mathcal F}_\varphi^2({\mathcal H})}|^2
=\sum_{k\ge 1}  \sum_{n\ge 1} |\langle E_n, S^{1/2}(K_z e_k^z )\rangle_{{\mathcal F}_\varphi^2({\mathcal H})}|^2\\
&=&\sum_{k\ge 1}   \| S^{1/2}(K_z e_k^z )\|^2_{{\mathcal F}_\varphi^2({\mathcal H})}
=\sum_{k\ge 1}   \langle \, S(K_z e_k^z )\,,\, K_z e_k^z\,\rangle_{{\mathcal F}_\varphi^2({\mathcal H})},
\end{eqnarray*}
where the second equality above follows by the reproducing formula. The last relation together with \eqref{trace1} leads us now to \eqref{trace}.
\end{proof}

We now turn to the proof of Theorem \ref{Schatten}.
\begin{proof}[Proof of Theorem \ref{Schatten}.] 
\ 
\\

{\bf Implication} ${\bf (a)\Rightarrow(c)}$. Assume $H_{T^*}\in\mathcal{S}^p.
$
If $\{e_k\}_{k\ge 1}$ is an arbitrary orthonormal basis of $\mathcal{H}$, in view of \eqref{m-osc} we obtain 
$$
\sum_{k\ge 1} \|\left(MO^2 T^*(z)\right)^{1/2} e_k\|^p = \sum_{k\ge 1}  \|H_{T^*}(k_z e_k)\|^p<\infty, \quad z\in \C^d,
$$
since $\{k_z e_k\}_{k\ge 1}$ is an orthonormal set in ${\mathcal F}_\varphi^2({\mathcal H})$ and $p\ge 2$. This shows that $\left(MO^2 T^*(z)\right)^{1/2} \in{\mathcal{S}^p}(\mathcal{H})$.
Now, since $\mathcal{S}^p(\mathcal{H})$ is separable (see \cite{Gohberg}, Chapter 3, Section 6), by Pettis' theorem \cite{tal}, in order to show that $z\mapsto \left(MO^2 T^*(z)\right)^{1/2}$ is measurable, it suffices to prove that it is 
weakly measurable. With this aim, let $S\in \mathcal{S}^q(\mathcal{H})$, where $1/p+1/q=1$.
If $\{e_k\}_{k\ge 1}$ is an orthonormal basis of $\mathcal{H}$, we have 
\begin{eqnarray*}
\langle \left(MO^2 T^*(z)\right)^{1/2}, S\rangle&=&trace({S^*\left(MO^2 T^*(z)\right)^{1/2}})\\
&=&\sum_{k\ge 1}\langle \left(MO^2 T^*(z)\right)^{1/2} e_k,S e_k\rangle,\quad z\in \C^d,
\end{eqnarray*} 
and, since the last expression above defines a measurable function, it follows that $\left(MO^2 T^*(z)\right)^{1/2}$ is measurable.

In order to prove \eqref{SCH-BMO}, let $\{e_n^z\}_{n\ge 1}$  be an orthonormal basis of $\mathcal{H}$ that diagonalizes the compact self-adjoint operator $\left(MO^2 T^*(z)\right)^{1/2}$. Apply Lemma \ref{trace2} to $S:=((H_{T^*})^*H_{T^*})^{p/2}$ to deduce
\begin{eqnarray}\label{htsp}
\|H_{T^*}\|_{\mathcal{S}^p}=trace[((H_{T^*})^*H_{T^*})^{p/2}]=\int_{\C^d}\sum_{k\ge 1}   \big \langle \, ((H_{T^*})^*H_{T^*})^{p/2}(K_z e_k^z ), K_z e_k^z\,\big\rangle d\mu_\varphi(z).
\end{eqnarray}
For each $z\in\C^d$, in view of Jensen's inequality, relation \eqref{m-osc}  and the choice of $e_k^z$, we obtain
\begin{eqnarray*}
\sum_{k\ge 1}   \Big\langle \, ((H_{T^*})^*H_{T^*})^{p/2}(K_z e_k^z )\,,\, K_z e_k^z\,\Big\rangle
&\ge& \sum_{k\ge 1}  \Big \langle \, (H_{T^*})^*H_{T^*}(k_z e_k^z )\,,\, k_z e_k^z\,\Big\rangle^{p/2} K(z,z)\\
&=& \sum_{k\ge 1}  \| H_{T^*}(k_z e_k^z )\|^p K(z,z)\\
&=&\sum_{k\ge 1}    \langle MO^2 T^*(z) e_k^z , e_k^z\rangle^{p/2}K(z,z) \\
&=&     \| \left(MO^2 T^*(z)\right)^{1/2}\|_{\mathcal{S}^p(\mathcal{H})}^{p}\, K(z,z), 
\end{eqnarray*}
and, returning to \eqref{htsp}, we get   $(c)$.

{\bf Implication} ${\bf (c)\Rightarrow(b)}$. This a direct consequence of relation \eqref{kern-ev}. Indeed,
for any  orthonormal basis $(e_k)_k$ of $\mathcal H$, we have 
 \begin{align*}
\sum_{k\ge 1} \|Q_T(z)^{1/2} e_k\|^p &\lesssim \sum_{k\ge 1}\left\langle MO^2 T^*(z)e_k,e_k\right\rangle^{p/2}.
\end{align*}
Since $p\ge 2$, this implies $ \|Q_T(z)^{1/2}\|^p_{\mathcal S^p(\mathcal H)}\lesssim \left\|\left(MO^2 T^*(z)\right)^{1/2}\right\|^{p}_{\mathcal S^p(\mathcal H)}$, and  thus $(b)$ follows. 

{\bf Implication} ${\bf (b)\Rightarrow(a)}$. Recall that, by \eqref{m4}, we have
\begin{equation}\label{m4n}
\|H_{T^*} f\|\lesssim \int_{\C^d} \|Q_T(z)^{1/2} f(z)\|^2 \, d\mu_\varphi(z),\quad f\in {\mathcal F}_\varphi^2({\mathcal H}).
\end{equation}
Hence,  if  the multiplication operator 
$$
M_{Q^{1/2}} f(z):= Q_T(z)^{1/2} f(z),\quad  f\in {\mathcal F}_\varphi^2({\mathcal H}),\, z\in\C^d,
$$
belongs to some Schatten class  ideal, then $H_{T^*} $ will have the same property.

We now provide a sufficient condition for  Schatten class membership of multiplication operators using an interpolation argument.
For a strongly-measurable
operator-valued function $R:\C^d\rightarrow \mathcal{L}(\mathcal{H})$, consider the operator
$$
M_{R} f(z):= R(z) f(z),\quad  f\in {\mathcal F}_\varphi^2({\mathcal H}), \, z\in\C^d.
$$
For $p\ge 2$, we denote by $L^p(\C^d,\mathcal{S}^p({\mathcal H}), d\lambda_\varphi)$ the space of strongly measurable functions
$g:\C^d\rightarrow \mathcal{S}^p({\mathcal H})$ satisfying
$$
\int_{\C^d} \|g(z)\|^p_{  \mathcal{S}^p(\mathcal{H})} \, d\lambda_\varphi(z)<\infty.
$$
Moreover, $L^\infty(\C^d,\mathcal{L}({\mathcal H}), d\lambda_\varphi)$ will stand for the closure in the supremum 
norm of $\mathcal{L}({\mathcal H})-$valued simple functions (see \cite{berg}, Ch. 5, page 107). 

We claim that, if $R\in L^2(\C^d,\mathcal{S}^2({\mathcal H}), d\lambda_\varphi)$, then $M_R: {\mathcal F}_\varphi^2(\mathcal{H}) \rightarrow { L}_\varphi^2(\mathcal{H}) $ is a Hilbert-Schmidt
operator. To this end, let $\{e_n\}_{n\ge 1}$ be an orthonormal basis of the scalar Fock space ${\mathcal F}_\varphi^2(\C)$
and let $\{f_k\}_{k\ge 1}$ be an orthonormal basis of $\mathcal{H}$. Then it is clear that $\{E_{n,k}(z):=e_n(z)f_k\}_{n,k\ge1}$ 
is an orthonormal basis of ${\mathcal F}_\varphi^2({\mathcal H})$, and we have
\begin{eqnarray}\label{inter}
\|M_R\|^2_{\mathcal{S}^2}&=&\sum_{n,k\ge 1}\|M_R (E_{n,k})\|^2=\sum_{n,k\ge 1} \int_{\C^d}\| R(z) f_k\|^2 \, |e_n(z)|^2 \,d\mu_\varphi(z)\nonumber\\
&=&
\sum_{n\ge 1} \int_{\C^d}\| R(z)\|_{\mathcal{S}^2(\mathcal{H})}^2 \, |e_n(z)|^2 \,d\mu_\varphi(z)\nonumber\\
&=& \int_{\C^d}\| R(z)\|_{\mathcal{S}^2(\mathcal{H})}^2 \, K(z,z) \,d\mu_\varphi(z)=\|R\|^2_{ L^2(\C^d,\mathcal{S}^2({\mathcal H}), d\lambda_\varphi)},
\end{eqnarray}
and the claim follows. 

Moreover,  if $R\in L^\infty(\C^d,\mathcal{L}({\mathcal H}), d\lambda_\varphi)$, we have
\begin{equation}\label{inter1}
\|M_R \|_{\mathcal{S}^\infty}\le \hbox{ess\,sup}_{z\in\C^d} \|R(z)\|_{\mathcal{L}(\mathcal{H})},
\end{equation}
where $\mathcal{S}^\infty$ denotes the space of bounded linear operators from ${\mathcal F}_\varphi^2(\mathcal{H})$ to $L_\varphi^2(\mathcal{H})$.
Taking into account \eqref{inter}, \eqref{inter1} together with Theorem 5.1.2 (page 107) in \cite{berg}, it now follows by interpolation 
that
$$
\|M_R\|_{\mathcal{S}^p}\le \|R\|_{ L^p(\C^d,\mathcal{S}^p({\mathcal H}), d\lambda_\varphi)}, \quad p\ge 2.
$$
Particularizing $R(z):=Q_T(z)^{1/2}$ above and using \eqref{m4n} now yield
$$
\|H_{T^*}\|^p_{\mathcal{S}^p}\lesssim \|M_{Q_T^{1/2}}\|^p_{\mathcal{S}^p}\le 
\int_{\C^d} \|Q_T(z)^{1/2}\|^p_{  \mathcal{S}^p(\mathcal{H})} K(z,z)\, d\mu_\varphi(z),
$$
which completes the proof.
\end{proof}


{\it Acknowledgements.} We would like to thank N. Nikol'skii for suggesting to us the method of proof of Theorem \ref{density pol}.
\bibliographystyle{plain}

\end{document}